\theoremstyle{plain}
\newtheorem{prop}{Proposition}[section]
\newtheorem{thm}[prop]{Theorem}
\newtheorem{lem}[prop]{Lemma}
\theoremstyle{remark}
\newtheorem{dfn}[prop]{Definition}
\newtheorem{rmk}[prop]{Remark}
\newtheorem{ex}[prop]{Example}
\def\semicolon{;}
\def\applytolist#1{
    \expandafter\def\csname multi#1\endcsname##1{
        \def\multiack{##1}\ifx\multiack\semicolon
            \def\next{\relax}
        \else
            \csname #1\endcsname{##1}
            \def\next{\csname multi#1\endcsname}
        \fi
        \next}
    \csname multi#1\endcsname}
\newcommand*{\transpose}{%
  {\mathpalette\@transpose{}}%
}
\newcommand*{\@transpose}[2]{%
  \raisebox{\depth}{$\m@th#1\intercal$}%
}
\newcommand{\floor}[1]{\left\lfloor#1\right\rfloor}
\newcommand{\twomat}[4]{\left(\begin{matrix}
  #1 & #2 \\ #3 & #4
    \end{matrix}
  \right)}
\newcommand{\stwomat}[4]{\left(\begin{smallmatrix}
  #1 & #2 \\ #3 & #4
    \end{smallmatrix}
  \right)}
\def\calc#1{\expandafter\def\csname c#1\endcsname{{\mathcal #1}}}
\def\bbc#1{\expandafter\def\csname bb#1\endcsname{{\mathbb #1}}}
\def\bfc#1{\expandafter\def\csname bf#1\endcsname{{\mathbf #1}}}
\def \pints{\bbZ_{(p)}}
\def \Zp{\bbZ_p}
\def \Qp{\bbQ_p}
\def \uhp{\cH}
\def \GL{\textbf{GL}}
\def \SL{\textbf{SL}}
\def \PSL{\textbf{PSL}}
\begin{document}
\title{On unbounded denominators and hypergeometric series}
\author{Cameron Franc, Terry Gannon, Geoffrey Mason\thanks{The first two authors were partially supported by grants from NSERC. The third author was supported by the Simons Foundation $\# 427007$.} \thanks{\href{mailto:franc@math.usask.ca}{franc@math.usask.ca}, \href{mailto:tgannon@math.ualberta.ca}{tgannon@math.ualberta.ca}, \href{mailto:gem@ucsc.edu}{gem@ucsc.edu}}}
\date{}
\maketitle

\abstract{We study the question of when the coefficients of a hypergeometric series are $p$-adically unbounded for a given rational prime $p$. Our first main result is a necessary and sufficient criterion (applicable to all but finitely many primes) for determining when the coefficients of a hypergeometric series with rational parameters are $p$-adically unbounded. This criterion is then used to show that the set of unbounded primes for a given series is, up to a finite discrepancy, a finite union of primes in arithmetic progressions. This set can be computed explicitly. We characterize when the density of the set of unbounded primes is $0$, and when it is $1$. Finally, we discuss the connection between this work and the unbounded denominators conjecture concerning Fourier coefficients of modular forms.}

\tableofcontents
\setcounter{tocdepth}{1}

\section{Introduction}
\label{s:intro}
Hypergeometric series are objects of considerable interest. From a number theoretic perspective, hypergeometric differential equations provide a convenient and explicit launching point for subjects such as $p$-adic differential equations \cite{Dwork1}, \cite{Dwork2}, rigid differential equations and Grothendieck's $p$-curvature conjecture \cite{Katz}, as well as the study of periods and motives. Recent attention has focused on the relationships between quotient singularities, integer ratios of factorials, and the Riemann hypothesis \cite{Borisov}, \cite{RodriguezVillegas},  \cite{Bober}.\ It is the Beukers-Heckman \cite{BeukersHeckman} classification of generalised hypergeometric differential equations with finite monodromy that underlies these connections.

If $F(z)$ is a solution of an ordinary differential equation (Fuchsian on $\bbP^1$, say) with a finite monodromy group, then it is an algebraic function. Moreover if $F(z)$ has rational Taylor coefficients, then an old theorem of Eisenstein states that for some integer $N$, the series $F(Nz)$ has integer coefficients, save for possibly the constant term (see \cite{DworkVanDerPoorten} for an interesting discussion of this result). This says two things:
\begin{enumerate}
\item $F(z)$ has $p$-adically bounded coefficients for almost all primes $p$;
\item for those primes for which $F(z)$ has $p$-adically unbounded coefficients, the coefficients cannot grow too quickly in $p$-adic absolute value.
\end{enumerate}
We say that $F(z)$ has $p$-adically unbounded coefficients when arbitrarily high powers of $p$ appear in the denominators of coefficients.
In the present paper,  given a hypergeometric series with rational coefficients, we study the set of all primes for which the series has $p$-adically unbounded coefficients. We do not assume that the monodromy is finite, although we do impose some mild restrictions, such as irreducibility of the monodromy representation. See the discussion at the start of Section \ref{s:hypergeometric} for a precise description of the conditions that we impose.

The basic tool that we use to study hypergeometric series is an old result of Kummer (cf.\ Theorem \ref{t:kummer}), characterising the $p$-adic valuation of binomial coefficients in terms of counting $p$-adic carries in certain $p$-adic additions.\ In Theorem \ref{t:coeffval} we employ Kummer's result to deduce a formula for the $p$-adic valuation of the coefficients of a generalized hypergeometric series $_nF_{n-1}$ with rational parameters.\ In Section \ref{s:n=2} we specialize to the classical case of $_2F_1$. The key result there is Theorem \ref{t:necandsuff}, which uses our valuation formula to give a convenient necessary and sufficient condition for  $_2F_1(a,b;c;z)$ (for generic rational parameters $a$, $b$ and $c$) to have $p$-adically unbounded coefficients. Here, generic means the fractional parts of $a,b,c$ are admissible in the sense of Definition \ref{d:admissible}. In the remainder of the section we study the set $S(a,b;c)$ of all primes for which a given
hypergeometric series $_2F_1(a,b;c;z)$ has $p$-adically unbounded coefficients. It turns out (Proposition \ref{p:SGP} and the following discussion) that, up to a finite discrepancy, the set $S(a,b;c)$ of unbounded primes is a union of all primes in a number of arithmetic progressions. The Dirichlet density of this set is an explicitly computable quantity. We characterize exactly when this density is zero (Theorem \ref{t:density0}), and also when the density is one (Theorem \ref{t:density1}). It turns out that the density is zero (for generic  parameters) precisely when the monodromy is finite. This result can be interpreted as a converse to Eisenstein's theorem, as it implies an infinite number of unbounded primes whenever the monodromy  is infinite. Note that the converse to Eisenstein's theorem only holds for hypergeometric series with generic parameters, as the famous example of $_2F_1(\frac 12, \frac 12;1;z)$ illustrates. At the other end of the spectrum, we show that $S(a,b;c)$ contains all but finitely many primes precisely when $c$ has the smallest fractional part of the three (generic) parameters. Said differently, one third of all basic hypergeometric series with rational parameters are as bad, from an arithmetic perspective, as possible.

Our interest in these questions comes from an old paper of Atkin--Swinnerton-Dyer \cite{ASD}, which raised the question of whether nonzero noncongruence scalar modular forms can have integer Fourier coefficients.\ The conjecture is that there will always be some unbounded primes\footnote{Since every scalar valued modular form on a finite index subgroup of $\SL_2(\bbZ)$ can be expressed as a power of the Dedekind $\eta$-function times an algebraic function of the classical $j$-invariant, Eisenstein's theorem implies there can only be finitely many such unbounded primes if the noncongruence subgroup is of finite index in $\SL_2(\bbZ)$.}.\ Some of the most interesting work on this unbounded denominators conjecture (UBD) is due to Anthony Scholl \cite{AS} and Winnie Li and  Ling Long together with their collaborators and students \cite{LL}, \cite{KL}.\ In \cite{FrancMason1} and \cite{Gannon}, the authors  showed explicitly how to describe vector-valued modular forms of rank two for $\SL_2(\bbZ)$ in terms of hypergeometric series $_2F_1$, and in \cite{FrancMason1} these series were then used to verify the extension of the UBD conjecture to vector-valued modular forms of rank two for $\SL_2(\bbZ)$.\ Shortly after, a similar result was proved \cite{FrancMason2} for some vector-valued modular forms of rank three, using generalized hypergeometric series $_3F_2$. In both of these papers, only a very conservative use of hypergeometric series was made. For example, in the case of series with infinite monodromy, unbounded denominators were established in \cite{FrancMason1} by showing that there exists an arithmetic progression of primes $p$ that occur at least to power $p^{-1}$ in the coefficients of the given series. But it was unclear whether the coefficients were in fact $p$-adically unbounded for such primes. The present paper gives rather complete answers to the questions raised in the final sections of \cite{FrancMason1} and \cite{MasonFcoeffs}. In particular, we prove (for generic parameters) that if the $\SL_2(\bbZ)$ representation has infinite image, then there is a positive density of unbounded primes (if the image is finite, the kernel is conguence and all primes will be $p$-adically bounded). In Section \ref{s:Schwarzlist} we recall some of these facts relating modular forms and hypergeometric series, and we combine the results of \cite{Mason2-dim} and \cite{FrancMason1} to enumerate a modular analogue at level one of the famous Schwarz list classifying hypergeometric series with finite monodromy groups. Applying our results to these examples yields an independent (and completely elementary) verification that those modular forms have bounded denominators.

More interesting is the application of our results to the UBD conjecture for  vector-valued modular forms of rank two for $\Gamma(2)$.  This will be a more serious test of the UBD conjecture: there are three parameters worth of 2-dimensional $\Gamma(2)$-representations ($\SL_2(\bbZ)$ has requires only one parameter), and almost every finite image 2-dimensional $\Gamma(2)$-representation is noncongruence, whereas no $\SL_2(\bbZ)$ ones are. Richard Gottesman treats precisely this question  in his upcoming PhD thesis \cite{Gottesman}, building on the results of this paper.

\section{Basic notions and notations}
\label{s:basic}
Throughout $p$ denotes a rational prime, and $\pints$ denotes the ring of rational numbers $\frac ab$ where $p$ does not divide $b$. Hence $\pints^\times$ denotes the set of rational numbers $\frac ab$ such that $p$ is coprime to both $a$ and $b$. Let $\Zp$  denote the ring of $p$-adic integers, which is the completion of $\pints$ for the $p$-adic valuation $v_p$ normalized so that $v_p(p) = 1$. Thus, for rational $r$, $v_p(r)$ denotes the exact power of $p$ occurring in the prime decomposition of $r$. Let $\Qp$ denote the field of $p$-adic numbers. We recall the following basic facts concerning $p$-adic expansions.
\begin{lem}
\label{l:periodicexpansion}
With notation as above:
\begin{enumerate}
\item An element $x \in \pints^\times$ has a purely periodic $p$-adic expansion if and only if $x \in [-1,0)$.
\item Let $\frac nd \in \pints^\times$ have a purely periodic $p$-adic expansion, of minimal period $M$. Assume that $\gcd(n,d) = 1$. Then $M$ is the multiplicative order of $p$ in $(\bbZ/d\bbZ)^\times$.
\end{enumerate}
\end{lem}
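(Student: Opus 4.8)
The plan is to translate pure periodicity of a $p$-adic expansion into a closed rational expression by summing a geometric series, and then to read off both the interval in (1) and the period in (2) from that expression. Throughout I would use that every $x \in \pints^\times$ is a $p$-adic unit, and so has a unique $p$-adic expansion $x = \sum_{i \ge 0} a_i p^i$ with $0 \le a_i < p$ and $a_0 \neq 0$.

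For the forward direction of (1), I would assume the expansion is purely periodic of period $M$, so that the block $(a_0, \dots, a_{M-1})$ repeats. Setting $A = \sum_{i=0}^{M-1} a_i p^i$ and using the convergent geometric series $\frac{1}{1-p^M} = \sum_{j \ge 0} p^{Mj}$ in $\Zp$, I get
$$x = A \sum_{j \ge 0} p^{Mj} = \frac{A}{1-p^M} = \frac{-A}{p^M - 1}.$$
Since $a_0 \neq 0$ we have $1 \le A \le p^M - 1$, which forces $x \in [-1, 0)$. For the converse I would write $x = \frac nd$ in lowest terms with $p \nmid d$ and $d > 0$, take $M$ to be the multiplicative order of $p$ in $(\bbZ/d\bbZ)^\times$, and use $d \mid p^M - 1$ to write $p^M - 1 = kd$. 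Then $x = \frac{-A}{p^M-1}$ with $A = -nk \in \bbZ$, and $x \in [-1,0)$ forces $1 \le A \le p^M - 1$; expanding $A = \sum_{i=0}^{M-1} a_i p^i$ in base $p$ and running the geometric-series identity backwards exhibits a purely periodic series summing to $x$. The blocks $A p^{Mj}$ occupy disjoint ranges of powers of $p$, so this is a bona fide $p$-adic expansion, and by uniqueness it is the expansion of $x$.

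For (2), I would pin down the minimal period $M$ by two divisibilities. From $x = \frac{-A}{p^M-1}$ I get $n(p^M - 1) = -Ad$, so $\gcd(n,d) = 1$ yields $d \mid p^M - 1$ for every period $M$ of the expansion; that is, $\mathrm{ord}_d(p) \mid M$. Conversely the construction in (1) produces a purely periodic expansion of period exactly $\mathrm{ord}_d(p)$, so $\mathrm{ord}_d(p)$ is itself a period. Using the standard fact that the greatest common divisor of two periods of a purely periodic sequence is again a period — so the minimal period divides every period — I conclude that the minimal period divides $\mathrm{ord}_d(p)$, while the first divisibility gives the reverse. Hence the minimal period equals $\mathrm{ord}_d(p)$.

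The computations are largely routine; the steps that need care are the bookkeeping that keeps $A$ in $\{1, \dots, p^M - 1\}$ (ensuring the $a_i$ are legitimate $p$-adic digits and correctly handling the endpoint $x = -1$, where every digit equals $p-1$), the appeal to uniqueness of $p$-adic expansions to identify the constructed series with the actual expansion of $x$, and the elementary lemma that the minimal period of a purely periodic sequence divides all of its periods. None of these is a serious obstacle, so the main challenge is simply organizing the correspondence cleanly in both directions.
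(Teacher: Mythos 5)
Your proposal is correct and follows essentially the same route as the paper: summing the geometric series to write a purely periodic expansion as $\frac{A}{1-p^M}$ with $1 \le A \le p^M-1$, and conversely using $d \mid p^{\mathrm{ord}_d(p)}-1$ to manufacture the periodic expansion and pin down the minimal period. The only cosmetic difference is that you phrase the final step of (2) via divisibility of periods where the paper uses the inequality ``the order of $p$ mod $d$ is a lower bound for any period,'' but the content is identical.
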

\begin{proof}
First let $x = c_0c_1\cdots c_{M-1}c_0c_1\cdots$ be the periodic $p$-adic expansion of $x$. If $y$ denotes the integer whose expanion in base $p$ is $y = c_0c_1\cdots c_{M-1}$, then $x = \frac{y}{1-p^M}$. Since $0 < y \leq p^M-1$, it follows that $x \in [-1,0)$. Observe that if $x = \frac nd$ with $\gcd(n,d) = 1$, then this shows that $d$ divides $p^M-1$, so that the period $M$ is at least as large as the order of $p$ in $(\bbZ/d\bbZ)^\times$.

Conversely, suppose $\frac nd \in \pints^\times \cap [-1,0)$, with $\gcd(n,d) = 1$, and let $M$ be the order of $p$ in $(\bbZ/d\bbZ)^\times$. Assume $n > 0$ and $d< 0$. Then $1-p^M = du$ for a positive integer $u$, and thus $\frac{n}{d}=\frac{nu}{1-p^M}$. Observe that $0 < nu \leq p^M-1$, so that the positive integer $nu$ has a finite $p$-adic expansion of at most $M$ digits. But then $\frac{nu}{1-p^M}$ visibly has a periodic $p$-adic expansion of period dividing $M$. Since the order $M$ of $p$ modulo $d$ was seen above to be a lower bound for the minimal possible period, it follows that $M$ is indeed the minimal period of the $p$-adic expansion of $\frac nd$.
\end{proof}

If $x$ is a real number, then let $\floor{x}$ denote the unique integer satisfying $\floor{x} \leq x < \floor{x}+1$. Similarly define the fractional part of $x$ by $\{x\} = x - \floor{x}$.

\begin{dfn}
\label{d:truncation}
If $a \in \bbZ_p$ is a $p$-adic integer, then for each $j \geq 0$ let $\tau_j(a)$ denote the unique integer satisfying $0 \leq \tau_j(a) < p^j$ such that $\tau_j(a) \equiv a \pmod{p^j}$. The maps $\tau_j \colon \bbZ_p \to \bbZ$ are called \emph{truncation} operators.
\end{dfn}

\begin{lem}
\label{l:digits}
Let $x=\frac{n}{d}$ denote a rational number with $\gcd(n,d) = 1$ satisfying $0<  x < 1$, and let $p$ denote a prime such that $x-1 \in \Zp^\times$. Let $M$ denote the order of $p$ mod $d$, and let $x-1 = \overline{x_0x_1\ldots x_{M-1}}$ denote the $p$-adic expansion of $x-1$. Then for each index $0 \leq j < M$,
\[
  x_j = \floor{\left\{-p^{M-1-j}x\right\}p}.
\]
\end{lem}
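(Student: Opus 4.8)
The plan is to extract the $j$-th $p$-adic digit of $x-1$ by a cyclic-shift argument that converts the question about $p$-adic digits into an elementary computation with real fractional parts. First, since $p^{M-1-j}$ is an integer, subtracting an integer leaves a fractional part unchanged, so $\{-p^{M-1-j}x\} = \{-p^{M-1-j}(x-1)\}$, and I may work with $x-1$ throughout; write $e = M-1-j$, so $0 \le e \le M-1$. Next I invoke Lemma~\ref{l:periodicexpansion}: since $x-1 \in \pints^\times \cap [-1,0)$ has minimal period $M$, its purely periodic expansion gives $x-1 = y/(1-p^M)$, where $y = \sum_{k=0}^{M-1} x_k p^k$ is the integer whose base-$p$ digits are $x_0,\dots,x_{M-1}$. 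Because $0 < x < 1$ forces $x-1 \in (-1,0)$, one has $0 < y < p^M-1$; in particular the digits $x_k$ are neither all $0$ nor all $p-1$.

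Then I compute the relevant fractional part. From $-p^e(x-1) = p^e y/(p^M-1)$ we get $\{-p^e(x-1)\} = r/(p^M-1)$, where $r = p^e y \bmod (p^M-1)$. Using $p^M \equiv 1 \pmod{p^M-1}$, reduce $p^e y = \sum_k x_k p^{k+e}$ termwise via $p^{k+e} \equiv p^{(k+e)\bmod M}$, so that $r$ equals the integer $y^{(e)} = \sum_{i=0}^{M-1} x_{(i-e)\bmod M}\, p^i$, a cyclic shift of the digit string. Its leading digit, the coefficient of $p^{M-1}$, is $x_{(M-1-e)\bmod M} = x_j$.

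Finally I read off this leading digit. Writing $y^{(e)} = x_j\, p^{M-1} + L$ with $0 \le L < p^{M-1}$, a one-line manipulation gives $p\,y^{(e)}/(p^M-1) = x_j + (x_j + pL)/(p^M-1)$, and since $0 \le x_j + pL < p^M-1$ the floor of the right-hand side is exactly $x_j$. Combining this with the previous step yields $\floor{\{-p^{M-1-j}x\}\,p} = \floor{p\,y^{(e)}/(p^M-1)} = x_j$, as claimed.

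The main obstacle—indeed the only point requiring care—is the control of edge effects in the two reductions: ensuring that $p^e y \bmod (p^M-1)$ lands on $y^{(e)}$ itself rather than wrapping around to $y^{(e)} - (p^M-1)$, and that the final remainder $x_j + pL$ stays \emph{strictly} below $p^M-1$. Both failure cases occur precisely when every digit equals $p-1$, i.e.\ $y = p^M-1$ (equivalently $x = 0$), while the degenerate $y = 0$ (equivalently $x = 1$) would make the fractional part vanish. Both are excluded by the hypothesis $0 < x < 1$, which is exactly why that assumption is needed.
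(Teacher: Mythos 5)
Your proof is correct, including the two edge cases you flag (the wrap-around $y^{(e)} = p^M-1$ in the modular reduction, and the remainder $x_j + pL = p^M-1$ in the final division), both of which are excluded because $0 < x < 1$ forces $0 < y < p^M-1$. The underlying idea --- that multiplication by $p^{M-1-j}$ cyclically shifts the periodic digit block so that $x_j$ becomes the leading digit --- is the same one the paper uses, but your execution is genuinely different and arguably cleaner. The paper works directly with the $p$-adic expansion of $-p^{M-1-j}x$: it writes $-p^{M-1-j}n = \alpha d + r$, identifies the integer part $\alpha+1$ and the purely periodic tail $\frac{r}{d}-1$ (whose digits are the complements $p-1-x_i$, cyclically shifted), applies the truncation identity $\tau_M(\frac{r}{d}-1) = (1-\frac{r}{d})(p^M-1)$, and extracts $x_j$ from the inequality chain $0 \leq \frac{x_j}{p^M-1} \leq \frac{r}{d}p - x_j \leq \frac{p^M+x_j-p}{p^M-1} \leq 1$, finishing with a separate check that equality on the right cannot occur. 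You instead convert everything to integer arithmetic modulo $p^M-1$ via the identity $x-1 = y/(1-p^M)$ from Lemma~\ref{l:periodicexpansion}, so that the fractional part becomes $y^{(e)}/(p^M-1)$ for an explicit cyclic shift $y^{(e)}$ of $y$, and the digit is read off by a one-line division. What your route buys is that the digit-complement bookkeeping and the inequality chain are replaced by a single modular reduction with transparently identified failure modes; what the paper's route buys is that it stays within the $p$-adic-expansion formalism (truncations $\tau_M$, periodic tails) that is reused elsewhere in the section.
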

\begin{proof}
Our hypotheses on $x$ ensure that both $-x$ and $x-1$ have periodic $p$-adic expansions. Let $-p^{M-1-j}n = \alpha d + r$ where $0 \leq r < d$. The $p$-adic expansion of $-p^{M-1-j}x = \alpha+1 + \left(\frac{r}{d}-1\right)$ is
\[
  -p^{M-1-j}x = \overbrace{00\cdots 0}^{M-1-j \textrm{ terms}}\overline{(p-1-x_0)(p-1-x_1)\cdots (p-1-x_{M-1})}
\]
Now, $\alpha+1$ is uniquely determined as that integer such that when you subtract it from this expansion, you get a purely periodic expansion. Hence $-\alpha-1 = (p-1-x_{j+1})(p-1-x_{j+2})\cdots (p-1-x_{M-1})$ and
\[
  \frac{r}{d}-1 = \overline{(p-1-x_{j+1})\cdots (p-1-x_{M-1})(p-1-x_0)\cdots (p-1-x_j)}.
\]
Observe that $\tau_M(\frac rd-1) = (1-\frac rd)(p^M-1)$ and hence
\[
  0 \leq \left(1-\frac rd\right)(p^M-1) - (p-1-x_j)p^{M-1} \leq p^{M-1}-1.
\]
This is equivalent with
\[
  0 \leq \frac{x_j}{p^M-1} \leq \frac rdp - x_j \leq \frac{p^{M}+x_j-p}{p^M-1} \leq 1.
\]
Since $\left\{-p^{M-1-j}x\right\} = \frac rd$, the proof is complete unless there is equality on the right above. But equality can only occur if $x_j = p-1$ and $\frac{r}{d}p-x_j = 1$, that is, $\frac{r}{d} = 1$. Since $\frac{r}{d} < 1$, this concludes the proof.
\end{proof}

Let $x = \frac nd$ satisfy $0 < x < 1$, and given a prime $p$, let $x_j(p)$ denote the $j$th $p$-adic digit of $x-1$. Observe that Lemma \ref{l:digits} implies that if $p$ varies over primes in a fixed residue class $p \equiv r \pmod{d}$, then the ratio $\frac{x_j(p)}{p}$ converges to the quantity $\{-r^{M-1-j}x\}$, which depends on the residue class $r \pmod{d}$, but not on $p$. This fact will explain why if a hypergeometric series has $p$-adically bounded or unbounded coefficients for large enough primes, then it will be similarly $p$-adically bounded or unbounded for all large enough primes in a corresponding congruence class.

\begin{lem}
\label{l:distinctdigits}
Let $x$ and $y$ denote rational numbers satisfying $0 < x < y < 1$, and such that $x-1,y-1 \in \Zp^\times$ for a rational prime $p$. Let $x-1 = \sum_{j \geq 0} x_j(p)p^j$ denote the $p$-adic expansion of $x-1$, and define the digits $y_j(p)$ similarly. Let $D$ denote the least common multiple of the denominators of $x$ and $y$. Then if $p > D$, one has $x_j(p) \neq y_j(p)$ for all $j \geq 0$.
\end{lem}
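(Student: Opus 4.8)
The plan is to reduce the statement to the explicit digit formula of Lemma~\ref{l:digits} and then to observe that the quantities whose floors produce the digits all lie in $\tfrac1D\bbZ$, so that the hypothesis $p>D$ mechanically forces the two digit sequences apart.

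First I would pass to a common period. By Lemma~\ref{l:periodicexpansion} both $x-1$ and $y-1$ have purely periodic expansions, with minimal periods equal to the orders of $p$ modulo the denominators of $x$ and $y$. Setting $M$ to be the order of $p$ modulo $D$, each of these minimal periods divides $M$, so both expansions admit $M$ as a (not necessarily minimal) period; it therefore suffices to check $x_j(p)\ne y_j(p)$ for $0\le j<M$. The computation proving Lemma~\ref{l:digits} uses only that the relevant denominator divides $p^M-1$, never the minimality of $M$, so it applies verbatim with this common $M$ and yields
\[
  x_j(p)=\floor{\left\{-p^{M-1-j}x\right\}p},\qquad
  y_j(p)=\floor{\left\{-p^{M-1-j}y\right\}p}
\]
for all $0\le j<M$.

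Next I would compare the two fractional parts. Write $z=p^{M-1-j}$, which is coprime to $D$ since $p>D$, and express $x=N_1/D$, $y=N_2/D$ with integers $0<N_1<N_2<D$ (possible because both denominators divide $D$ and $0<x<y<1$). Then $\left\{-zx\right\}=\big((-zN_1)\bmod D\big)/D$ and similarly for $y$, so both fractional parts are integer multiples of $1/D$ lying in $(0,1)$. They coincide exactly when $zN_1\equiv zN_2\pmod D$; as $\gcd(z,D)=1$ this forces $N_1\equiv N_2\pmod D$, hence $N_1=N_2$, contradicting $N_1<N_2$. So the two fractional parts are distinct multiples of $1/D$ and therefore differ by at least $1/D$.

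Multiplying by $p$, the two real numbers $\left\{-zx\right\}p$ and $\left\{-zy\right\}p$ differ by at least $p/D>1$, and two reals differing by more than $1$ have distinct floors; hence $x_j(p)\ne y_j(p)$. The argument is short, and the only place that asks for care is the first step: one must justify using the floor formula with the common period $M$ rather than the individual minimal periods. The estimate $p/D>1$ is exactly what makes the hypothesis $p>D$ indispensable, since when $p\le D$ the scaled fractional parts can share a floor.
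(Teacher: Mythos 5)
Your proof is correct and takes essentially the same route as the paper's: both reduce to the floor formula of Lemma~\ref{l:digits}, observe that the two fractional parts $\{-p^{M-1-j}x\}$ and $\{-p^{M-1-j}y\}$ are distinct multiples of $1/D$ (distinctness coming from $\gcd(p^{M-1-j},D)=1$), and use $p>D$ to force the floors apart. The only differences are cosmetic --- you argue the contrapositive where the paper argues by contradiction, and you explicitly justify applying Lemma~\ref{l:digits} with the common period $M$, a point the paper passes over silently.
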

\begin{proof}
Observe that if $p > D$ then both $x$ and $y$ satisfy the hypotheses of Lemma \ref{l:digits}. Thus, the condition $x_j(p) = y_j(p)$ is equivalent with
\begin{equation}
\label{eq:samedigit}
\floor{\left\{-p^{M-1-j}x\right\}p}=\floor{\left\{-p^{M-1-j}y\right\}p}.
\end{equation}
Let $\frac{\alpha}{D}$ and $\frac{\beta}{D}$ denote the fractional parts above, and without loss of generality take $\alpha \leq \beta$. Then \eqref{eq:samedigit} is equivalent with $0 \leq (\beta-\alpha) < \frac{D}{p}$. Hence if $p >D$ then \eqref{eq:samedigit} is equivalent with the simpler identity $\left\{-p^{M-1-j}x\right\} = \left\{-p^{M-1-j}y\right\}$. But this forces $p^{M-1-j}(x-y)$ to be an integer, contradicting the fact that $p$ is coprime to $D$. Thus, if $p > D$ then $x_j(p) \neq y_j(p)$.
\end{proof}

If $a$ and $b$ are $p$-adic numbers, then let $c_p(a,b)$ denote the number of $p$-adic carries that are required to evaluate the sum $a+b$. Hence $c_p$ defines a map $c_p \colon \Qp^2 \to \bbN\cup \{\infty\}$. Recall that the binomial polynomials
\[
  \binom{x+n}{n} = \frac{(x+1)(x+2)\cdots (x+n)}{n!}
\]
define continuous functions on $\Zp$ that vanish only at $-n,-n+1,\ldots,-1$.
\begin{thm}[Kummer]
\label{t:kummer}
Let $x \in \Zp$ and $n \in \bbZ_{\geq 0}$. Then $v_p\binom {x+n}n = c_p(x,n)$.
\end{thm}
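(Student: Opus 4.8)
The plan is to reduce the $p$-adic statement to the classical integer version of Kummer's theorem, using the truncation operators $\tau_j$ of Definition~\ref{d:truncation} together with the continuity of the binomial polynomial and a limiting argument. Two ingredients are needed: first, the theorem for nonnegative integer arguments; and second, a mechanism to pass from the integer truncations $\tau_j(x)$ to the $p$-adic integer $x$ itself.

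For the integer case, suppose $x = m \in \bbZ_{\geq 0}$. Here I would invoke Legendre's formula $v_p(N!) = \frac{N - s_p(N)}{p-1}$, where $s_p(N)$ is the sum of the base-$p$ digits of $N$. Applying this to $v_p\binom{m+n}{n} = v_p((m+n)!) - v_p(m!) - v_p(n!)$ gives $v_p\binom{m+n}{n} = \frac{s_p(m) + s_p(n) - s_p(m+n)}{p-1}$. On the other hand, carrying out the base-$p$ addition of $m$ and $n$ digit by digit, each carry lowers the digit sum of the running total by exactly $p-1$ relative to the naive sum of digits; hence the right-hand side counts precisely the number of carries $c_p(m,n)$. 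This settles the case $x \in \bbZ_{\geq 0}$.

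For general $x \in \Zp$, write its $p$-adic expansion as $x = \sum_{i \geq 0} x_i p^i$ and that of $n$ as $\sum_i n_i p^i$ (a finite sum), and let $\gamma_0, \gamma_1, \ldots \in \{0,1\}$ be the carry bits produced when adding $x$ and $n$, so that $c_p(x,n) = \sum_{i \geq 0} \gamma_i$. The key observation is that for $j$ larger than the number of $p$-adic digits of $n$, the integer $\tau_j(x)$ (whose digits are $x_0, \ldots, x_{j-1}$ followed by zeros) produces exactly the same carry bits in positions $0, \ldots, j-1$ and no carries beyond, so that $c_p(\tau_j(x), n) = \sum_{i=0}^{j-1} \gamma_i$. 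Combined with the integer case, $v_p\binom{\tau_j(x)+n}{n} = \sum_{i=0}^{j-1}\gamma_i$, a nondecreasing sequence converging to $c_p(x,n)$.

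It remains to take $j \to \infty$. Since $\tau_j(x) \to x$ in $\Zp$ and the binomial polynomial is continuous, $\binom{\tau_j(x)+n}{n} \to \binom{x+n}{n}$. If $c_p(x,n) < \infty$, the valuations $\sum_{i=0}^{j-1}\gamma_i$ are eventually equal to the constant $c_p(x,n)$; since a $p$-adic sequence of constant finite valuation cannot converge to $0$, the limit $\binom{x+n}{n}$ is nonzero and, by the ultrametric inequality, has valuation $c_p(x,n)$. If instead $c_p(x,n) = \infty$, then $v_p\binom{\tau_j(x)+n}{n} \to \infty$, forcing $\binom{x+n}{n} = 0$ and hence $v_p\binom{x+n}{n} = \infty = c_p(x,n)$. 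Both cases give the claim. The main subtlety to handle carefully is this last interchange of the limit with the valuation: one must verify that finiteness of the carry count is exactly what prevents the limiting binomial coefficient from vanishing (equivalently, that $c_p(x,n) = \infty$ occurs precisely for $x \in \{-1, \ldots, -n\}$, the zeros of the polynomial), so that the two regimes match up consistently.
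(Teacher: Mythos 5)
Your proof is correct and follows essentially the same route as the paper: both reduce to the classical integer case of Kummer's theorem and then extend to all of $\Zp$ by a continuity/limiting argument, with the key matching point in each being that $c_p(x,n)=\infty$ occurs exactly at the zeros $x\in\{-n,\dots,-1\}$ of the binomial polynomial. The only differences are cosmetic: you supply a proof of the integer base case via Legendre's formula (the paper simply cites Kummer) and you pass to the limit along the explicit truncation sequence $\tau_j(x)$ rather than abstractly observing that both sides are continuous (indeed locally constant) on $\Zp\setminus\{-n,\dots,-1\}$ and agree on a dense subset.
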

\begin{proof}
This was proved by Kummer in \cite{Kummer} when $x \in \bbZ_{\geq 0}$. A uniform proof that handles all $p$-adic integers can be given, but we will show that the result for integral $x$ extends to $p$-adic integers by continuity. 

Since each polynomial $\binom{x+n}{n}$ for $n \in \bbZ_{\geq 0}$ is continuous on $\Zp$, and $v_p$ is continuous on $\Qp^\times$, the left side of the claimed equality is continuous on $\Zp \setminus \{-n,-n+1,\ldots, -1\}$. 

We claim that $c_p(x,n) = \infty$ if and only if $x \in \{-n,-n+1,\ldots, -1\}$. First, if $x \in \{-n,-n+1,\ldots, -1\}$ then the $p$-adic expansion of $x$ contains infinitely many nonzero digits. On the other hand, $x+n$ is an integer such that $x+n\geq 0$, so that it has a finite $p$-adic expansion (similarly for $n$). This implies that an infinite number of carries had to occur to evaluate $x+n$, meaning $c_p(x,n) = \infty$ as claimed.

On the other hand, suppose that $c_p(x,n) = \infty$. Since $n$ has a finite $p$-adic expansion, the only way this can occur is if eventually the $p$-adic expansion of $x$ is $p-1$ repeated infinitely often. That is, $x = y -p^N$ for some integer $0 \leq y < p^N$ and $N \geq 1$. In particular, $x$ is a strictly negative integer. Now the condition that $c_p(x,n) = \infty$ is equivalent to $y+n \geq p^N$, hence $x \geq -n$. 

This verifies that $c_p(x,n)$ takes finite values on $\Zp\setminus\{-n,-n+1,\ldots, -1\}$. It is easily seen to be continuous there, since if $c_p(x,n) < \infty$, then the evaluation of $x+n$ will not involve carries beyond a digit corresponding to some large power $p^N$. But then $c_p(x+\alpha p^{N+1},n) = c_p(x,n)$ for $\alpha \in \Zp$ shows that $c_p(x,n)$ is $p$-adically continuous (in fact locally constant) on $\Zp\setminus \{-n,-n+1\ldots, -1\}$. Hence Kummer's theorem extends to $x \in \Zp$ by continuity.
\end{proof}

\begin{rmk}
Kummer's Theorem \ref{t:kummer} does not extend to $x \in \bbQ_p$. For example, if $n \in \bbZ_{\geq 0}$ then $c_p(\frac 1p,n) = 0$, while $v_p\binom{\frac 1p+n}{n} = -n-v_p(n!) < 0$. This observation is what prevents us from handling primes that divide the denominators of hypergeometric parameters in a uniform manner with all other primes.
\end{rmk}

\section{Hypergeometric series}
\label{s:hypergeometric}
Recall that for integers $n \geq 1$, the \emph{generalized hypergeometric series} $_nF_{n-1}$ is defined by the formula
\[
  _nF_{n-1}(\alpha_1,\ldots,\alpha_n; \beta_1,\ldots, \beta_{n-1};z) = \sum_{m\geq 0}\frac{\prod_{j=1}^n(\alpha_j)_m}{\prod_{k=1}^{n-1}(\beta_k)_m} \frac{z^m}{m!}.
\]
Here $(a)_m$ denotes the Pochhammer symbol defined by 
\[(a)_m = \begin{cases}
1 & m = 0,\\
a(a+1)\cdots(a+m-1) &m \geq 1.
\end{cases}\]
Let $A_m$ denote the $m$th coefficient of $_nF_{n-1}(\alpha_j; \beta_k;z)$. Observe that each $A_m$ is a rational number provided that the hypergeometric parameters $\alpha_j$ and $\beta_k$ are rational. In this note we only consider rational parameters.

Hypergeometric series are solutions of Fuchsian differential equations. We restrict here to series with rational coefficients, so we require the parameters $\alpha_j,\beta_k$ to be rational. If $\alpha_j - \beta_k \in \bbZ$ for some indices $j$ and $k$, then the corresponding monodromy representation is reducible (Proposition 2.7 of \cite{BeukersHeckman}).   If some $\alpha_j$ lies in $\bbZ_{< 0}$, then $_nF_{n-1}(\alpha_j;\beta_k;z)$ is a polynomial. Obviously no $\beta_k$ can lie in $\bbZ_{<0}$ for $_nF_{n-1}$ to be well-defined, and if $\beta_k \in \bbZ_{\geq 0}$ then the monodromy around $0$ will have repeated eigenvalues. Finally, provided none of $\alpha_j,\beta_k,\alpha_j-\beta_k$ are integers, it can be shown that  the series $_nF_{n-1}(\alpha_j;\beta_k;z)$ is $p$-adically bounded iff $_nF_{n-1}(\alpha_j+m_j;\beta_k+n_k;z)$ is, for any $m_j,n_k\in\bbZ$  (the proof for $_2F_1$ is given in Lemma \ref{l:admissible}; the proof for general $_nF_{n-1}$ is given in \cite{Bernstein}). Moreover, if the monodromy is irreducible, then shifting the hypergeometric parameters by integers also does not affect the monodromy (Corollary 2.6 of \cite{BeukersHeckman}). Thus we arrive at the following definition:
\begin{dfn}
\label{d:admissible}
Rational hypergeometric parameters $(\alpha_1,\ldots, \alpha_n;\beta_1,\ldots, \beta_{n-1})$ are said to be \emph{admissible} provided that the following two conditions are satisfied:
\begin{enumerate}
\item $0 < \alpha_j,\beta_k < 1$ for all $j$ and $k$;
\item $\alpha_j \neq \beta_k$ for all $j$ and $k$.
\end{enumerate}
\end{dfn}

\begin{dfn}
\label{d:goodprime}
Let $(\alpha_1,\ldots, \alpha_n; \beta_1,\ldots, \beta_{n-1})$ denote rational hypergeometric parameters. A rational prime $p$ is \emph{good} for these parameters provided that $v_p(\alpha_j-1) = v_p(\beta_k-1) = 0$ for all $j$ and $k$.
\end{dfn}
Note that for fixed parameters, all but finitely many primes are good. For almost all choices of rationals
$\alpha_j,\beta_k$, the fractional parts $(\{\alpha_j\},\{\beta_k\})$ are admissible and have the identical list of bounded primes as $(\alpha_j,\beta_k)$.

\begin{dfn}
\label{d:period}
Let $(\alpha_1,\ldots, \alpha_n; \beta_1,\ldots, \beta_{n-1})$ denote rational hypergeometric parameters, and assume that $p$ is a good prime. The associated \emph{period} is the least common multiple of the multiplicative order of $p$ modulo the various (reduced) denominators of the quantities $\alpha_j-1$ and $\beta_k-1$.
\end{dfn}

Good primes associated with admissible parameters are precisely the primes such that the quantities $\alpha_j-1$ and $\beta_k-1$ have purely periodic $p$-adic expansions. The corresponding period is then nothing but the least common multiple of the various periods of these expansions.

\begin{thm}
\label{t:coeffval}
Let $(\alpha_1,\ldots,\alpha_n; \beta_1,\ldots, \beta_{n-1})$ denote rational hypergeometric parameters, and let $p$ denote a prime such that $v_p(\alpha_j-1) \geq 0$ and $v_p(\beta_k-1) \geq 0$ for all $j$ and $k$. Then if $A_m$ denotes the $m$th coefficient of $_nF_{n-1}(\alpha_j;\beta_k;z)$,
\begin{equation}
\label{eq:valuation}
 v_p(A_m) = \sum_{j=1}^n c_p(\alpha_j-1,m) - \sum_{k=1}^{n-1} c_p(\beta_k-1,m).
\end{equation}
Further, assume that the parameters $(\alpha_1,\ldots, \alpha_n;\beta_1,\ldots, \beta_{n-1})$ are admissible, assume that $p$ is a good prime for this data, and let $M$ denote the corresponding period. Then
\begin{equation}
\label{eq:periodicity}
  v_p(A_{mp^M}) = v_p(A_m)
\end{equation}
for all $m \in \bbZ_{\geq 0}$.
\end{thm}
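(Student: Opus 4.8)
The plan is to prove the two assertions in turn, with the valuation formula \eqref{eq:valuation} serving as the engine for the periodicity statement \eqref{eq:periodicity}.

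For \eqref{eq:valuation}, the first step is to rewrite each Pochhammer symbol as a binomial polynomial evaluated at a shifted parameter. Since $(a)_m = a(a+1)\cdots(a+m-1)$, setting $x = a-1$ gives $\binom{a-1+m}{m} = \frac{a(a+1)\cdots(a+m-1)}{m!} = \frac{(a)_m}{m!}$, so that $(a)_m = m!\binom{a-1+m}{m}$. Substituting this for every $\alpha_j$ and every $\beta_k$ in the definition of $A_m$, the powers of $m!$ cancel exactly: the $n$ factors of $m!$ coming from the numerator Pochhammer symbols are balanced by the $n-1$ factors from the denominator symbols together with the explicit $1/m!$, leaving
\[
  A_m = \frac{\prod_{j=1}^n \binom{\alpha_j-1+m}{m}}{\prod_{k=1}^{n-1}\binom{\beta_k-1+m}{m}}.
\]
Applying $v_p$ and invoking Kummer's Theorem \ref{t:kummer}---which applies precisely because the hypotheses $v_p(\alpha_j-1)\geq 0$ and $v_p(\beta_k-1)\geq 0$ place each shifted parameter in $\Zp$, with $m \in \bbZ_{\geq 0}$---converts each $v_p\binom{\gamma-1+m}{m}$ into the carry count $c_p(\gamma-1,m)$ and yields \eqref{eq:valuation} at once.

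For \eqref{eq:periodicity}, I would first apply \eqref{eq:valuation} to both $A_{mp^M}$ and $A_m$, reducing the claim to the single per-parameter identity $c_p(\gamma-1,mp^M) = c_p(\gamma-1,m)$, where $\gamma$ ranges over the admissible parameters $\alpha_j,\beta_k$. Because $p$ is good and the parameters are admissible, each $\gamma-1$ lies in $\pints^\times \cap [-1,0)$ and hence, by Lemma \ref{l:periodicexpansion}, has a purely periodic $p$-adic expansion whose minimal period divides the common period $M$ of Definition \ref{d:period}. Thus I may write $\gamma-1 = \sum_{i\geq 0} x_i p^i$ with $x_{i+M} = x_i$ for all $i$. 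The crux is then a comparison of carries. Writing $m = \sum_i m_i p^i$, the integer $mp^M$ has the same digits shifted up by $M$ places, with zeros in positions $0,\ldots,M-1$. In the addition $(\gamma-1)+mp^M$, positions $0$ through $M-1$ contribute $x_i$ added to $0$ with no incoming carry, so no carry is generated there and the carry entering position $M$ is $0$. From position $M$ onward, the pure periodicity $x_{M+i}=x_i$ together with $(mp^M)_{M+i}=m_i$ means the digit-by-digit addition reproduces exactly the sum $(\gamma-1)+m$ started at position $0$; a straightforward induction on $i$ shows the carry entering position $M+i$ equals the carry entering position $i$ of $(\gamma-1)+m$. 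Consequently the two additions generate carries at corresponding positions, giving $c_p(\gamma-1,mp^M)=c_p(\gamma-1,m)$, and summing over the parameters completes the proof.

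The main obstacle is the carry-matching induction, and in particular the reliance on \emph{pure} (not merely eventual) periodicity of $\gamma-1$: this is exactly what guarantees that shifting $m$ up by $M$ digits lands its lowest digit $m_0$ against the digit $x_M = x_0$ that it met in the unshifted sum, so that the entire carry cascade is genuinely reproduced rather than only resembling the original. This is why the hypotheses that $p$ be good and the parameters be admissible (forcing $\gamma-1 \in [-1,0)$) are indispensable, and it is where Lemma \ref{l:periodicexpansion} does the real work.
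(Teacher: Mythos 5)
Your proof is correct and follows the same route as the paper: rewrite $A_m$ as a ratio of binomial polynomials, apply Kummer's theorem for \eqref{eq:valuation}, and use pure periodicity of the expansions of $\alpha_j-1$ and $\beta_k-1$ to get $c_p(\gamma-1,mp^M)=c_p(\gamma-1,m)$ for \eqref{eq:periodicity}. The only difference is that you spell out the carry-matching induction that the paper leaves as an assertion, and your argument there is sound.
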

\begin{proof}
First observe that
\[
  A_m = \frac{\prod_{j=1}^n(\alpha_j)_m}{\prod_{k=1}^{n-1}(\beta_k)_m} \frac{1}{m!} = \frac{\prod_{j=1}^{n}\binom{\alpha_j-1+m}{m}}{\prod_{k=1}^{n-1}\binom{\beta_k-1+m}{m}}
\]
Thus \eqref{eq:valuation} follows immediately from Theorem \ref{t:kummer}.

For the next claim, observe that since we have assumed that $p$ is good, each of $\alpha_j-1$ and $\beta_k-1$ has a periodic $p$-adic expansions of period dividing $M$, by Lemma \ref{l:periodicexpansion}. Hence $c_p(\alpha_j-1,mp^M) = c_p(\alpha_j-1,m)$ and $c_p(\beta_k-1,mp^M) = c_p(\beta_k-1,m)$, and thus \eqref{eq:periodicity} follows from \eqref{eq:valuation}.
\end{proof}

\begin{rmk}
It follows immediately from Theorem \ref{t:coeffval} that under the hypotheses of that theorem,
\[
  -(n-1)\log_p(m) \leq v_p(A_m) \leq n\log_p(m).
\]
\end{rmk}

\begin{ex}
Consider the hypergeometric series $_2F_1(\frac 12, \frac 12;1;z)$ that arises in the study of the monodromy of the Legendre family of elliptic curves. Theorem \ref{t:kummer} immediately implies the well-known fact that $v_p(A_m) = 2c_p(-\frac 12,m) \geq 0$ for all odd primes $p$. This is an example of a solution of a differential equation with an infinite monodromy group, but for which there is a a unique prime such that the coefficients of $_2F_1(\frac 12, \frac 12;1;z)$ are $p$-adically unbounded (obviously $p =2$ is the bad prime). This does not contradict Theorem \ref{t:density0} below, as the parameters $(\frac 12,\frac 12;1)$ are not admissible.
\end{ex}

\begin{prop}
\label{p:bigsup}
Let $(\alpha_1,\ldots, \alpha_n;\beta_1,\ldots, \beta_{n-1})$ denote admissible parameters, and let $p$ denote a corresponding good prime. Then
\[
  \sup_m v_p(A_m) = \infty.
\]
In particular, for any given set of admissible hypergeometric parameters, we have $\sup_m v_p(A_m) = \infty$ for all but finitely many primes $p$.
\end{prop}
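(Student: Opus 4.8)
The plan is to evaluate the carry formula of Theorem \ref{t:coeffval} along a cleverly chosen sequence of indices and exploit the fact that an $_nF_{n-1}$ has one more numerator factor than denominator factor. Since $p$ is good and the parameters are admissible, each $a_j := \alpha_j - 1$ and $b_k := \beta_k - 1$ lies in $(-1,0)$ and is a $p$-adic unit, so by Lemma \ref{l:periodicexpansion} each has a purely periodic $p$-adic expansion whose period divides the associated period $M$. My candidate indices are $m = p^{LM} - 1$ for $L = 1,2,3,\ldots$; the point of taking $m$ to be an all-$(p-1)$ digit string of length a multiple of $M$ is that it forces a maximal and essentially uniform number of carries against every periodic summand simultaneously.

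The core step is an exact carry count. Fix any of the periodic units $a \in (-1,0)$ above, with digits $a = \overline{d_0 d_1 \cdots d_{M-1}}$, and set $N = LM$. Because $a$ is a unit its bottom digit satisfies $d_0 \geq 1$, so adding $m = p^N - 1$ produces a carry out of position $0$; and at every subsequent position $i$ with $1 \le i \le N-1$ the incoming carry together with the digit $p-1$ of $m$ forces $d_i + (p-1) + 1 \ge p$, hence another carry. This gives exactly $N$ carries from the first $N$ positions. Beyond position $N$ the digits of $m$ vanish, so the only further carries come from propagating the final carry into the periodic tail; since $N$ is a multiple of $M$ this tail is again $\overline{d_0 d_1 \cdots}$, and the number of additional carries is the number $e(a)$ of leading digits equal to $p-1$. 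As admissibility gives $a \neq -1$, not all digits in a period equal $p-1$, so $e(a) \le M-1$ is a finite constant \emph{independent of $L$}. Thus $c_p(a, p^{LM}-1) = LM + e(a)$.

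Substituting into \eqref{eq:valuation} gives $v_p(A_{p^{LM}-1}) = \sum_{j}(LM + e(a_j)) - \sum_{k}(LM + e(b_k)) = LM + C$, where $C = \sum_j e(a_j) - \sum_k e(b_k)$ is a fixed integer. Letting $L \to \infty$ forces $v_p(A_{p^{LM}-1}) \to \infty$, which proves $\sup_m v_p(A_m) = \infty$. I expect the main subtlety to be precisely this bookkeeping past position $N$: a priori the subtracted $\beta$-terms could cancel the growth from the $\alpha$-terms, and the argument works only because each carry count has the \emph{same} leading term $LM$, so the surplus of one numerator factor over the denominator factors leaves a clean $LM$ while the bounded spillovers collapse into the constant $C$. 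The final sentence of the proposition is then immediate, since for fixed parameters all but finitely many primes are good.
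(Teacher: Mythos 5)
Your proof is correct and follows essentially the same strategy as the paper: both apply the carry formula of Theorem \ref{t:coeffval} to a sequence of indices that are (essentially) all-$(p-1)$ digit strings of length $LM$, so that each of the $n$ numerator terms and $n-1$ denominator terms contributes $LM+O(1)$ carries and the surplus numerator term yields $v_p(A_m)=LM+O(1)\to\infty$. The only difference is cosmetic: the paper uses the slightly modified integer $N_r=p^{Mr}-p^{M(r-1)}-\cdots-p^M-1$ in place of your $p^{LM}-1$, whereas you keep the plain all-$(p-1)$ string and instead account explicitly for the bounded spillover of carries past position $LM$.
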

\begin{proof}
Let $a$ be any nonzero $p$-adic integer with a purely periodic expansion of period $M$, and let $N_r = p^{Mr}-p^{M(r-1)}-\cdots-p^M-1$ be the integer whose $p$-adic expansion is $M$ copies of $(p-1)$, and then this is followed by $r-1$ segments of the form $(p-2)(p-1)\cdots (p-1)$. Some digit in each segment of size $M$ of the coefficients of $a$ is nonzero. In fact, the first such digit is indexed by $v_p(a)$. Since there are no carries before this digit, the number of carries from computing $a+N_r$ in the first segment of size $M$ is $M-v_p(a)$, and similarly for every other segment. Thus $c_p(a,N_r) = (M-v_p(a))r$. Taking $a$ to be each of $\alpha_j-1$ and $\beta_k-1$, it follows by Theorem \ref{t:coeffval} that
\[
  v_p(A_{N_r}) = \left(M-\sum_{j=1}^n v_p(\alpha_j-1) + \sum_{k=1}^{n-1}v_p(\beta_k-1)\right)r = Mr,
\]
where the last equality uses the assumption that $p$ is a good prime. Thus $v_p(A_{N_r})$ is unbounded, which proves the Proposition.
\end{proof}
\begin{rmk}
The proof of Proposition \ref{p:bigsup} shows that under those hypotheses, the sequence $v_p(A_m)$ has a subsequence that diverges like $\log_p(m)$.
\end{rmk}

\section{The case of $_2F_1$}
\label{s:n=2}
We are more interested in characterising when $\inf_m v_p(A_m) = -\infty$, that is, when $_nF_{n-1}$ has $p$-adically unbounded coefficients. It will be convenient to specialize to the case $n = 2$. First we show, using an argument that goes back to Gauss, that there is no loss in generality when considering only admissible parameters.
\begin{lem}
\label{l:admissible}
Let $(a,b;c)$ and $(r,s;t)$ denote two sets of rational hypergeometric parameters, and assume that 
\begin{enumerate}
\item[(i)] none of $a$, $b$, $c$, $a-c$ or $b-c$  is an integer;
\item[(ii)] $a-r$, $b-s$ and $c-t$ are all integers.
\end{enumerate}
Then for each prime $p$, the series $_2F_1(a,b;c;z)$ has $p$-adically unbounded coefficients if and only if the same is true for $_2F_1(r,s;t;z)$.
\end{lem}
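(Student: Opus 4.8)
The plan is to factor the general integer shift $(a,b;c)\rightsquigarrow(r,s;t)$ into a sequence of elementary steps, each moving a single parameter by $\pm 1$, and to prove that each elementary step preserves the property of having $p$-adically bounded coefficients. Since the hypotheses (i) only constrain $a,b,c$ and $a-c,b-c$ modulo $\bbZ$, they are preserved at every intermediate configuration $(\alpha,\beta;\gamma)$, so throughout one may assume that none of the current parameters, nor $\alpha-\gamma$, $\beta-\gamma$, is an integer. Writing the $m$th coefficient as $A_m=\frac{(a)_m(b)_m}{(c)_m\,m!}$, a raise of a numerator parameter $a\mapsto a+1$ multiplies $A_m$ by $\frac{a+m}{a}$, a lowering of the denominator $c\mapsto c-1$ by $\frac{c+m-1}{c-1}$, while the two opposite moves (lowering a numerator, raising the denominator) multiply by the reciprocal-type factors $\frac{a-1}{a+m-1}$ and $\frac{c}{c+m}$. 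Thus each elementary step satisfies
\[
 v_p(A'_m)=v_p(A_m)\pm\bigl(v_p(\ell+m')-v_p(\ell')\bigr),
\]
where $\ell\in\{a,b,c\}$ is the affected parameter, $m'\in\{m,m-1\}$, and the sign of the linear term $v_p(\ell+m')$ is $+$ for the two ``safe'' moves and $-$ for the two ``dangerous'' ones.

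For the safe moves the claim is immediate. When $p$ does not divide the denominator of $\ell$ one has $\ell+m'\in\Zp$, so $v_p(\ell+m')\ge 0$; otherwise $v_p(\ell+m')=v_p(\ell)$ is constant. In either case $v_p(\ell+m')$ is bounded below independently of $m$, so $v_p(A'_m)\ge v_p(A_m)-\mathrm{const}$, and a lower bound on one sequence transfers to the other.

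The real content is the two dangerous moves, where $v_p(\ell+m')$ enters with a minus sign and is genuinely unbounded (it spikes whenever $m'\equiv-\ell$ to a high power of $p$). I will treat the representative case of raising the denominator, $c\mapsto c+1$, where $v_p(A'_m)=v_p(A_m)+v_p(c)-v_p(c+m)$, and show that a lower bound $v_p(A_m)\ge B$ forces one for $(A'_m)$. The key is the elementary recursion $\frac{A_{m+1}}{A_m}=\frac{(a+m)(b+m)}{(c+m)(m+1)}$, valid for every prime, giving
\[
 v_p(A_{m+1})-v_p(A_m)=v_p(a+m)+v_p(b+m)-v_p(c+m)-v_p(m+1).
\]
Fix $m$ with $N:=v_p(c+m)$ large. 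Because $a-c$ and $b-c$ are non-integers, $v_p(a-c)$ and $v_p(b-c)$ are finite, so once $N$ exceeds these fixed valuations one has $v_p(a+m)=v_p\bigl((c+m)+(a-c)\bigr)=v_p(a-c)=O(1)$ and likewise $v_p(b+m)=O(1)$. The recursion then yields $v_p(A_m)\ge v_p(A_{m+1})+N-O(1)\ge B+N-O(1)$, whence $v_p(A'_m)=v_p(A_m)+v_p(c)-N\ge B+v_p(c)-O(1)$. Thus $(A'_m)$ is bounded below. The lowering of a numerator is handled by the mirror-image computation, in which the large term appears with a $+$ sign in the recursion and only the single difference $a-c$ (respectively $b-c$) is needed to bound the remaining terms.

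Running both implications of each elementary step gives the ``if and only if'' for that step, and composing the steps proves the lemma; the argument is uniform in $p$, covering the primes dividing the denominators as well. The main obstacle is precisely these dangerous moves: a naive valuation comparison fails because a single shift can change $v_p(A_m)$ by an unbounded amount at individual indices. What rescues the statement is that a spike in the denominator factor $v_p(c+m)$ is always ``paid for'' by an equally large valuation in the neighbouring coefficient $A_m$ itself, and this compensation is guaranteed exactly by the non-integrality hypotheses (i) on $a-c$ and $b-c$, which prevent the numerator factors from spiking simultaneously.
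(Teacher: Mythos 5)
Your proof is correct, but it takes a genuinely different route from the paper's. The paper invokes Gauss's contiguity relations: each elementary shift $_2F_1(a\pm1,b;c;z)$, $_2F_1(a,b;c\pm1;z)$ is expressed as a differential operator with coefficients in $\bbQ[z]$ and $\bbQ[z]\theta$ (where $\theta=z\frac{d}{dz}$) applied to $_2F_1(a,b;c;z)$; since such an operator sends a series with $v_p$ bounded below to another such series (the operator's rational constants like $\frac{1}{c-a}$, $\frac{1}{a}$, $\frac{1}{c-1}$ have finite valuation precisely because of hypothesis (i)), boundedness transfers in both directions and one composes. The key point is that even for your ``dangerous'' shifts, the contiguity relation expresses $A'_m$ as a $\bbZ_{(p)}$-bounded linear combination of $A_m$ and $A_{m-1}$, so no spike analysis is ever needed. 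You instead work directly with the single-term ratio $A'_m/A_m$, which for the dangerous moves involves the genuinely unbounded quantity $v_p(c+m)$ (resp.\ $v_p(a+m-1)$), and you rescue the argument by showing via the term recursion $A_{m+1}/A_m=\frac{(a+m)(b+m)}{(c+m)(m+1)}$ that any such spike is matched by an equally large valuation already present in $A_m$, using the finiteness of $v_p(a-c)$ and $v_p(b-c)$ guaranteed by (i). Your approach is more self-contained (it needs no identities beyond the definition of the Pochhammer symbol) and it makes visible exactly where each non-integrality hypothesis is used; the paper's approach is shorter once the Gauss relations are quoted, and sidesteps the compensation argument entirely. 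Both arguments are uniform in $p$ and both correctly reduce the general integer shift to the four elementary ones.
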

\begin{proof}
Let $\theta = z\frac{d}{dz}$. Then one easily verifies the following identities, which go back to Gauss (see section I.1 of \cite{Matsuda}):
\begin{align*}
_2F_1(a+1,b;c;z) &=\left(1+\frac 1a\theta\right) {}_2F_1(a,b;c;z),\\
_2F_1(a-1,b;c;z) &=\left((1-z)-\frac{(a+b-c)z}{c-a} + \frac{1-z}{c-a}\theta\right) {}_2F_1(a,b;c;z),\\
_2F_1(a,b;c+1;z) &=\left(\frac{(a+b-c)c}{(c-a)(c-b)}+\frac{(1-z)c}{(c-a)(c-b)}\frac{d}{dz}\right) {}_2F_1(a,b;c;z),\\
_2F_1(a,b;c-1;z) &=\left(1+\frac{1}{c-1}\theta\right) {}_2F_1(a,b;c;z).
\end{align*}
From this one sees that the claim holds if $(r,s;t) = (a\pm 1,b;c)$ or $(r,s;t) = (a,b;c\pm 1)$. The general case then follows by symmetry and repeated application of the cases already treated.
\end{proof}

Next we establish a necesary and sufficient condition for a hypergeometric series to have $p$-adically unbounded coefficients for all good primes $p$. Recall that if $x$ is a $p$-adic integer, then $\tau_j(x)$ denotes the truncation of $x$ mod $p^j$.
\begin{thm}
\label{t:necandsuff}
Let $(a,b;c)$ denote admissible hypergeometric parameters, and let $p$ denote a good prime. Then the following are equivalent:
\begin{enumerate}
\item[(i)] for some index $j$, we have $\tau_j(c-1) > \tau_j(a-1)$ and $\tau_j(c-1) > \tau_j(b-1)$;
\item[(ii)] $_2F_1(a,b;c;z)$ has $p$-adically unbounded coefficients.
\end{enumerate}
For good primes $p$, if the coefficients of $_2F_1(a,b;c;z)$ are $p$-adically bounded, then they are in fact $p$-adic integers.
\end{thm}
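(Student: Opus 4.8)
The plan is to translate everything into carry counting. For $n=2$, Theorem~\ref{t:coeffval} gives
\[
 v_p(A_m)=c_p(a-1,m)+c_p(b-1,m)-c_p(c-1,m),
\]
and since $p$ is good and the parameters admissible, $a-1,b-1,c-1$ have purely periodic expansions of period dividing $M$. I would first record the elementary bookkeeping that for $x\in\Zp$ and $m\in\bbZ_{\ge 0}$ the carry into digit $j$ of $x+m$ is $\epsilon_j^{(x)}:=\floor{(\tau_j(x)+\tau_j(m))/p^j}\in\{0,1\}$, so that $c_p(x,m)=\sum_{j\ge 1}\epsilon_j^{(x)}$ and hence
\[
 v_p(A_m)=\sum_{j\ge 1}\bigl(\epsilon_j^{(a)}+\epsilon_j^{(b)}-\epsilon_j^{(c)}\bigr),\qquad \epsilon_j^{(x)}=1\iff \tau_j(m)\ge p^j-\tau_j(x).
\]

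For the implication (ii)$\Rightarrow$(i) I would prove the contrapositive in the strong form that if (i) fails then $v_p(A_m)\ge 0$ for \emph{every} $m$; this simultaneously yields the final integrality claim. Suppose that for every $j$ one has $\tau_j(c-1)\le\tau_j(a-1)$ or $\tau_j(c-1)\le\tau_j(b-1)$. Fix $m$ and a level $j$ with $\epsilon_j^{(c)}=1$, i.e.\ $\tau_j(m)+\tau_j(c-1)\ge p^j$; if $\tau_j(c-1)\le\tau_j(a-1)$ then $\tau_j(m)+\tau_j(a-1)\ge p^j$ forces $\epsilon_j^{(a)}=1$, and symmetrically in the other case. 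Thus $\epsilon_j^{(a)}+\epsilon_j^{(b)}\ge\epsilon_j^{(c)}$ term by term, so $v_p(A_m)\ge 0$. Hence the coefficients are $p$-adic integers (so a fortiori bounded), which is exactly the closing assertion of the theorem, and contrapositively $p$-adic unboundedness forces (i).

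The reverse implication (i)$\Rightarrow$(ii) is the heart of the matter, and I would begin with a reduction: it suffices to produce a \emph{single} integer $m^*$ with $v_p(A_{m^*})<0$. Given such an $m^*<p^{N_0}$, I would choose $K$ a multiple of $M$ with $K\ge N_0+M$ and set $m^{(r)}=m^*\sum_{t=0}^{r-1}p^{tK}$, placing $r$ disjoint copies of $m^*$ separated by zero-buffers of length $\ge M$. Since each of $a-1,b-1,c-1$ is periodic of period dividing $M$ and no full period is all $(p-1)$'s (none equals $-1$), every buffer contains a digit $<p-1$ that absorbs any incoming carry; hence no carry crosses a buffer and the carry counts are additive over the copies. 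Combined with the identity $c_p(x,mp^M)=c_p(x,m)$ established in the proof of Theorem~\ref{t:coeffval}, this gives $v_p(A_{m^{(r)}})=r\,v_p(A_{m^*})\to-\infty$.

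It then remains to build $m^*$, and this is where I expect the real difficulty. Condition (i) at an index $j_0$ says precisely that the interval $\bigl[\,p^{j_0}-\tau_{j_0}(c-1),\ p^{j_0}-\max(\tau_{j_0}(a-1),\tau_{j_0}(b-1))\,\bigr)$ is nonempty, and any $m^*<p^{j_0}$ with $\tau_{j_0}(m^*)$ in it gives $\epsilon_{j_0}^{(c)}=1$, $\epsilon_{j_0}^{(a)}=\epsilon_{j_0}^{(b)}=0$, a net $-1$ at level $j_0$; because the digits of $m^*$ above $j_0$ vanish, each level $j>j_0$ can only carry for $c$ and so contributes $\le 0$. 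The entire contest therefore takes place at the levels $j<j_0$, and \textbf{the main obstacle} is to choose the lower digits of $m^*$ so that the competing $a$- and $b$-carries there do not cancel the single deficit at $j_0$. I would attack this by taking $j_0$ minimal subject to (i) and using the slack $\tau_{j_0}(c-1)-\max(\tau_{j_0}(a-1),\tau_{j_0}(b-1))>0$ to steer the low truncations $\tau_j(m^*)$ below the thresholds $p^j-\max(\tau_j(a-1),\tau_j(b-1))$; verifying that some admissible low-digit choice always yields $v_p(A_{m^*})<0$ is the delicate combinatorial point on which this direction hinges.
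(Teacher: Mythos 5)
Your carry-counting framework is correct: for $x\in\Zp$ and $m\in\bbZ_{\ge 0}$ the carry into digit $j$ of $x+m$ is indeed $\floor{(\tau_j(x)+\tau_j(m))/p^j}$, and this makes your (ii)$\Rightarrow$(i) direction and the closing integrality claim a genuinely cleaner argument than the paper's, which reaches the same conclusion by chasing chains of carries. The replication step (a seed $m^*$ repeated with zero-buffers of length $\ge M$, carries absorbed because no period of $a-1$, $b-1$, $c-1$ is all $(p-1)$'s) is also sound, and is morally what the paper does by repeating its digit blocks with period $M$.

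The gap is exactly the one you flag, and your proposed way of closing it would fail. You want to place $\tau_{j_0}(m^*)$ in the window $\bigl[p^{j_0}-\tau_{j_0}(c-1),\,p^{j_0}-\max(\tau_{j_0}(a-1),\tau_{j_0}(b-1))\bigr)$ while keeping every lower truncation below $p^j-\max(\tau_j(a-1),\tau_j(b-1))$. But take $j_0$ minimal and suppose the digits at position $j_0-1$ satisfy $c_{j_0-1}=a_{j_0-1}$ (minimality only forbids a three-way tie, and Lemma \ref{l:distinctdigits} rules out two-way ties only for $p>D$; the theorem covers all good primes). Then both endpoints of the window share the digit $p-1-c_{j_0-1}$ at position $j_0-1$, so \emph{every} $m^*$ in the window has $\tau_{j_0-1}(m^*)\ge p^{j_0-1}-\tau_{j_0-1}(c-1)$, which by minimality is $\ge p^{j_0-1}-\max(\tau_{j_0-1}(a-1),\tau_{j_0-1}(b-1))$: a carry at level $j_0-1$ for whichever of $a,b$ realizes that maximum is unavoidable, so your steering condition is unsatisfiable and you must instead ensure that each lower level nets $\le 0$ even though $a$- or $b$-carries occur there. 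The paper does this by a case split at position $j_0-1$: if $c_{j_0-1}$ strictly exceeds both $a_{j_0-1}$ and $b_{j_0-1}$, the single-digit seed $m^*=(p-c_{j_0-1})p^{j_0-1}$ works (all lower truncations vanish); if $c_{j_0-1}=a_{j_0-1}>b_{j_0-1}$, it traces back to the largest $k$ with $c_{k-1}>a_{k-1}$ and $c_i=a_i$ for $k\le i\le j_0-1$, and takes $m^*$ with digit $p-c_{k-1}$ at position $k-1$ and digits $p-c_i-1$ at positions $k\le i\le j_0-1$. This forces a chain of carries for $c-1$ at levels $k,\dots,j_0$, no carries at all for $a-1$ (each digit sum is $p-1-(c_i-a_i)$ with no incoming carry at position $k-1$), and at least one fewer carry for $b-1$ than for $c-1$ because $c_{j_0-1}>b_{j_0-1}$ kills the carry out of position $j_0-1$; the net is $\le -1$. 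Without this construction (or an equivalent), your forward implication is established only for primes $p>D$, not for all good primes.
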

\begin{proof}
Let $a_j$, $b_j$ and $c_j$ denote the digits in the $p$-adic expansions of $a-1$, $b-1$ and $c-1$, respectively. First assume that (i) holds, and let $j$ denote the smallest index such that $\tau_j(c-1) > \tau_j(a-1),\tau_j(b-1)$. By minimality of $j$ we can't have $c_{j-1}=a_{j-1}=b_{j-1}$. Without loss of generality we may assume that one of the following two conditions holds:
\begin{enumerate}
\item[(a)] $c_{j-1} > a_{j-1}$ and $c_{j-1} > b_{j-1}$;
\item[(b)] $c_{j-1} = a_{j-1}$ but $c_{j-1} > b_{j-1}$.
\end{enumerate}
We will define two different subsequences $m_r$, one for each case, such that $v_p(A_{m_r})$ diverges to $-\infty$.

In case (a), let $m_r = \sum_{s=0}^r(p-c_{j-1})p^{Ms+j-1}$ where $M$ is the period of the data $(a,b;c)$ and $p$. Observe that $c_{j-1}$ is nonzero, since it is strictly larger than $a_{j-1}$, say. Hence the expression defining $m_r$ is its $p$-adic expansion. We have $c_p(c-1,m_r) \geq r+1$ while $c_p(a-1,m_r) = c_p(b-1,m_r) = 0$. Thus by Theorem \ref{t:coeffval} we have $v_p(A_{m_r}) \leq -r-1$, which shows that the coefficients $A_m$ are $p$-adically unbounded in case (a).

In case (b), since $c_{j-1}=a_{j-1}$ while $\tau_j(c-1) > \tau_j(a-1)$, there exists a string of digits where $c_{i-1}=a_{i-1}$ for $i$ in some range $k < i \leq j$, but then $c_{k-1} > a_{k-1}$. Let
\[
  m_r = \sum_{s=0}^r\left((p-c_{k-1})p^{k-1} + \sum_{i=k}^{j-1}(p-c_{i}-1)p^i\right )p^{Ms}.
\]
Again, in this case $c_{k-1} > a_{k-1} \geq 0$, which shows that $c_{k-1}$ is nonzero and the expression defining $m_r$ is its $p$-adic expansion. Clearly $c_p(c-1,n_r) \geq (r+1)(j-k)$ and $c_p(a-1,m_r) = 0$. Similarly, since $c_{j-1} > b_{j-1}$, there can be no carry at the $j$th digit when evaluating $b-1+m_r$, and thus $c_p(b-1,m_r) \leq (r+1)(j-k-1)$. Hence by Theorem \ref{t:coeffval} we have 
\[v_p(A_{m_r}) \leq (r+1)(j-k-1)-(r+1)(j-k) = -r-1.\] 
This shows that in case (b), the coefficients $A_m$ are $p$-adically unbounded.

Conversely, assume that (i) does not hold. That is, assume that for every index $j$, we have
\[
  \tau_j(c-1) \leq \max(\tau_j(a-1),\tau_j(b-1)).
\]
We will show that if $\tau_j(c-1) \leq \tau_j(a-1)$, and if there is a $p$-adic carry at the $j$th digit when evaluating $(c-1)+m$, then there is also a carry at the $j$th digit when evaluating $(a-1)+m$. If $\tau_j(c-1) = \tau_j(a-1)$, this is obvious. 

We may thus assume that $\tau_j(c-1) < \tau_j(a-1)$. It follows that there is an index $0 \leq s \leq j-1$ such that $c_s < a_s$ but $c_k = a_k$ for $s < k \leq j-1$ (if any such indices $k$ exist). Let $m = m_0m_1\cdots$. Since there is a carry at digit $j$ in $(c-1)+m$, there are two possibilities: 
\begin{enumerate}
\item[(a)] $m_{j-1} \geq (p-c_{j-1})$ and the carry did not rely on an earlier carry;
\item[(b)] $m_{j-1} = (p-c_{j-1}-1)$ and the carry only occured because it was preceded by an earlier carry.
\end{enumerate}
If $m_{j-1} \geq (p-c_{j-1})$, then there is clearly also a carry at digit $j$ in $(a-1)+m$. So it remain to treat case (b).

Suppose that $m_{j-1} = (p-c_{j-1}-1)$, so that the $j$ carry in $(c-1)+m$ implies that there is an earlier carry. If the earlier carries do not extend back past digit $s$, there is some $m_k$ in that range such that $c_k \neq 0$ and $m_k \geq (p-c_k)$. But then in this range $a_k \geq c_k$ and there must alo be a sequence of carries in $(a-1)+m$ from the $k$th digit up to the $j$th, forcing a carry at digit $j$ as claimed. If the earlier carries in $(c-1)+m$ go even further to the left, past digit $s$, then we still win since $a_s > c_s$. Hence even if $m_s = (p-c_s-1)$, there is a carry at digit $s$ in $(a-1)+m$, and this will then force a sequence of carries up to the $j$th digit.

This verifies the claim that if $\tau_j(c-1) \leq \tau_j(a-1)$ and there is a carry at digit $j$ in $(c-1)+m$, then there is also a carry at digit $j$ in $(a-1)+m$. By symmetry we see that necessarily $c_p(c-1,m) \leq c_p(a-1,m)+c_p(b-1,m)$ for all integers $m \geq 0$. Hence $v_p(A_m) \geq 0$ by Theorem \ref{t:coeffval}, and this concludes the proof of the Theorem.
\end{proof}

\begin{rmk}
The proof of Theorem \ref{t:necandsuff} shows that under those hypotheses, if the coefficients of $_2F_1$ are $p$-adically unbounded, then the sequence $v_p(A_m)$ has a subsequence that diverges to $-\infty$ at least as quickly as $\frac{1}{M}\log_p(m)$.
\end{rmk}

\begin{rmk}
If $p$ is larger than the least common multiple of the denominators of $a$, $b$ and $c$, then it suffices, by Lemma \ref{l:digits}, to compare the $p$-adic digits of $a-1$, $b-1$ and $c-1$ in Theorem \ref{t:necandsuff}, rather than their truncations.
\end{rmk}

\begin{rmk}
  Theorem \ref{t:necandsuff} omits consideration of the finite number of primes that are not good for a given set of parameters. Recall that this means that for at least one of the parameters $x$, either $v_p(x-1) > 0$ or $v_p(x-1) < 0$. In the first case, we can write $x-1 = p^re$ for some other rational number $e$ in $(-1,0)$ that is coprime to $p$, and which thus has a periodic $p$-adic expansion. In this case $x-1$ has an expansion beginning with $r$ zeros, and then it becomes periodic. Theorem \ref{t:kummer} can still be used to analyze such primes as in our proof of Theorem \ref{t:necandsuff}. The case where $v_p(x-1) < 0$ is even easier, as then one can write $x = p^{-r}\frac nd$ for some $r \geq 1$ and $n,d\in\bbZ$ coprime to $p$. Thus
  \[
    (x)_m = p^{-rm}d^{-m}n(n+p^rd)(n+2p^rd)\cdots(n+(m-1)p^rd),
  \]
and $v_p((x)_m) = -rm$. However, since there are three parameters to consider, and it becomes cumbersome to formulate a definitive result for all cases that arise, we opted to state Theorem \ref{t:necandsuff} only for good primes.
\end{rmk}

\begin{ex}
Consider the admissible parameters $a = 1/6$, $b = 5/6$ and $c = 1/5$. They correspond to a hypergeometric equation with a finite monodromy group, and hence there should be only finitely many primes $p$ such that $_2F_1\left(\frac 16,\frac 56;\frac 15;z\right)$ has $p$-adically unbounded coefficients. 

Let $p \geq 7$ be a prime. Then by Lemma \ref{l:digits},
\begin{align*}
a-1=-\frac 56 &= \begin{cases}
\overline{\left(\frac{5p-5}{6}\right)} & p\equiv 1 \pmod{6},\\
\overline{\left(\frac{p-5}{6}\right)\left(\frac{5p-1}{6}\right)} &p\equiv 5 \pmod{6},
\end{cases}\\
b-1=-\frac 16 &= \begin{cases}
\overline{\left(\frac{p-1}{6}\right)} & p\equiv 1 \pmod{6},\\
\overline{\left(\frac{5p-1}{6}\right)\left(\frac{p-5}{6}\right)}& p\equiv 5 \pmod{6},
\end{cases}\\
c-1=-\frac 45&= \begin{cases}
\overline{\left(\frac{4p-4}{5}\right)}& p\equiv 1 \pmod{5},\\
\overline{\left(\frac{p-4}{5}\right)\left(\frac{4p-1}{5}\right)}& p\equiv 4 \pmod{5},\\
\overline{\left(\frac{2p-4}{5}\right)\left(\frac{p-2}{5}\right)\left(\frac{3p-1}{5}\right)\left(\frac{4p-3}{5}\right)}& p\equiv 2 \pmod{5},\\
\overline{\left(\frac{3p-4}{5}\right)\left(\frac{p-3}{5}\right)\left(\frac{2p-1}{5}\right)\left(\frac{4p-2}{5}\right)}& p\equiv 3 \pmod{5}.
\end{cases}
\end{align*}
There are thus eight cases to consider, and it is straightforward to use Theorem \ref{t:necandsuff} to check that $_2F_1(\frac 16,\frac 56;\frac 15;z)$ is $p$-integral in each of them. For example, suppose that $p\equiv 7 \pmod{30}$. We see that $a-1$ and $b-1$ are both $1$-periodic and $\max\{\tau_j(a-1),\tau_j(b-1)\} = \tau_j(a-1)$ for all $j$. As long as $p > 7$, then $\frac{5p-5}{6}$ is larger than each $p$-adic digit of $c-1$, and hence $_2F_1(\frac 16,\frac 56;\frac 15;z)$ is $p$-integral for such primes. If $p = 7$ then we have
\begin{align*}
a-1 &= \overline{5555}, & b-1 &= \overline{1111}, & c-1 &= \overline{2145}.
\end{align*}
In this case we still have $7$-integrality since, in terms of $7$-adic expansions, $2 \leq 5$, $21 \leq 55$, $214 \leq 555$ and $2145 \leq 5555$. Hence if $p \equiv 7 \pmod{30}$, then $_2F_1(\frac 16,\frac 56;\frac 15;z)$ is $p$-integral, as was claimed. The other seven cases are similar.
\end{ex}

\begin{ex}
Next consider $a = \frac 15$, $b = \frac 13$ and $c = \frac 12$. As above, there are eight cases. If $p \geq 7$ is prime then
\begin{align*}
a-1 = -\frac{4}{5} &= \begin{cases}
\overline{\left(\frac{4p-4}{5}\right)} & p \equiv 1 \pmod{5},\\
\overline{\left(\frac{p-4}{5}\right)\left(\frac{4p-1}{5}\right)} & p \equiv 4 \pmod{5},\\
\overline{\left(\frac{2p-4}{5}\right)\left(\frac{p-2}{5}\right)\left(\frac{3p-1}{5}\right)\left(\frac{4p-3}{5}\right)} & p \equiv 2 \pmod{5},\\
\overline{\left(\frac{3p-4}{5}\right)\left(\frac{p-3}{5}\right)\left(\frac{2p-1}{5}\right)\left(\frac{4p-2}{5}\right)} & p \equiv 3 \pmod{5},\\
\end{cases}\\
b-1 = -\frac{2}{3} &= \begin{cases}
\overline{\left(\frac{2p-2}{3}\right)}&p \equiv 1 \pmod{3},\\
\overline{\left(\frac{p-2}{3}\right)\left(\frac{2p-1}{3}\right)}&p \equiv 2 \pmod{3},
\end{cases}\\
c-1 = -\frac 12 &= \overline{\left(\frac{p-1}{2}\right)}.
\end{align*}
It is straightforward to check that for prime $p \geq 7$, the coefficients of $_2F_1(\frac 15,\frac 13;\frac 12;z)$ are $p$-adically unbounded if and only if $p \equiv 2,8$ or $14\pmod{15}$. For the remaining primes $p \geq 7$, the coefficients are in fact $p$-integral. Thus, the set of primes such that $_2F_1(\frac 15,\frac 13;\frac 12;z)$ has $p$-adically unbounded coefficients has a Dirichlet density of $\frac 38$.
\end{ex}

\begin{dfn}
\label{d:sgp}
Let $(a,b;c)$ denote rational hypergeometric parameters such that $c$ is not a negative integer. Then let $S(a,b;c)$ denote the set of primes $p$ such that $_2F_1(a,b;c;z)$ has $p$-adically unbounded coefficients.
\end{dfn}
As an application of Theorem \ref{t:necandsuff}, we show that the set $S(a,b;c)$ of unbounded primes for some admissible $_2F_1$ always has a Dirichlet density.
\begin{prop}
\label{p:SGP}
Let $(a,b;c)$ denote admissible hypergeometric parameters, and let $D$ denote the least common multiple of the denominators of $a$, $b$ and $c$. If $p > D$ is a good prime that satisfies $p \in S(a,b;c)$, then for all primes $q \geq p$ such that $q \equiv p \pmod{D}$, necessarily $q \in S(a,b;c)$ too. Thus $S(a,b;c)$ has a Dirichlet density of the form $\frac{\alpha}{\phi(D)}$ for an integer $\alpha$ satisfying $0 \leq \alpha \leq \phi(D)$, where $\phi(D)$ denotes Euler's $\phi$-function.
\end{prop}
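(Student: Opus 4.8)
The plan is to show that, for primes $p>D$, membership of $p$ in $S(a,b;c)$ depends only on the residue class of $p$ modulo $D$; the density statement then follows at once from Dirichlet's theorem on primes in arithmetic progressions. I would prove the stronger (two-sided) statement that $p\in S(a,b;c)\iff q\in S(a,b;c)$ for all primes $p,q>D$ with $p\equiv q\pmod D$, which contains the stated one-sided implication.

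First I would record two facts valid for every prime $p>D$. For $x\in\{a,b,c\}$, write $x=n_x/d_x$ in lowest terms; since $0<x<1$ and $d_x\mid D$, we have $x-1=(n_x-d_x)/d_x$ with $0<d_x-n_x<d_x\le D<p$, so $p$ divides neither $d_x$ nor $d_x-n_x$, giving $v_p(x-1)=0$. Hence every prime $p>D$ is automatically good and the qualifier in the statement is vacuous. Moreover the period $M$ attached to $(a,b;c)$ and $p$ (Definition \ref{d:period}) is the lcm of the orders of $p$ modulo the denominators of $a-1,b-1,c-1$, each of which divides $D$; as the order of $p$ modulo such a denominator depends only on $p\bmod D$, so does $M$. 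With this common $M$, all three expansions of $a-1,b-1,c-1$ are periodic of period dividing $M$, so the digit triple $(a_j,b_j,c_j)$ is periodic in $j$ with period $M$.

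Next I would reduce the unboundedness criterion to a residue-dependent condition. By the remark following Theorem \ref{t:necandsuff}—which rests on Lemma \ref{l:distinctdigits}, since for $p>D$ the digits of $a-1,b-1,c-1$ differ in every position, so a comparison of $j$-digit truncations is decided by their leading digits—condition (i) of Theorem \ref{t:necandsuff} is, for $p>D$, equivalent to the existence of an index $j$ with $c_j>a_j$ and $c_j>b_j$; by the periodicity above it suffices to let $j$ range over $0\le j<M$. For each such $j$, Lemma \ref{l:digits} (applied with the common period $M$, which is justified by periodicity of the expansions) gives $x_j=\floor{\{-p^{M-1-j}x\}\,p}$ for $x\in\{a,b,c\}$. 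Writing $\{-p^{M-1-j}x\}=\alpha_x/D$ with $\alpha_x\in\{0,\dots,D-1\}$, the computation in the proof of Lemma \ref{l:distinctdigits} shows that for $p>D$ the comparison $c_j>a_j$ holds if and only if $\alpha_c>\alpha_a$, since the gap $(\alpha_c-\alpha_a)p/D$ then exceeds $1$; similarly for $b$. Finally each $\alpha_x$ equals $\left(-p^{M-1-j}\,n_x\,(D/d_x)\right)\bmod D$, which depends only on $p\bmod D$. Hence the truth of ``there exists $j$ with $c_j>a_j$ and $c_j>b_j$'' depends only on $p\bmod D$, establishing the claimed equivalence for primes $p,q>D$ with $p\equiv q\pmod D$.

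For the density, the previous paragraph shows that, outside the finitely many primes $\le D$, the set $S(a,b;c)$ is precisely the union of those residue classes $r\bmod D$ for which the digit condition holds; each such $r$ is necessarily a unit modulo $D$, since any prime $p>D$ is coprime to $D$. By Dirichlet's theorem each progression $\{q:q\equiv r\pmod D\}$ with $\gcd(r,D)=1$ has Dirichlet density $1/\phi(D)$, whence $S(a,b;c)$ has density $\alpha/\phi(D)$, where $\alpha$ is the number of qualifying residue classes and $0\le\alpha\le\phi(D)$. I expect the main obstacle to be the middle step: correctly passing from the truncation comparison in Theorem \ref{t:necandsuff} to a per-digit, and then per-fractional-part, comparison that visibly depends only on $p\bmod D$, all while keeping track of the fact that the relevant period $M$ itself varies only with the residue class. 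The remaining steps are bookkeeping together with Dirichlet's theorem.
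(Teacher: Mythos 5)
Your proposal is correct and follows essentially the same route as the paper: reduce condition (i) of Theorem \ref{t:necandsuff} to a single-digit comparison via Lemma \ref{l:distinctdigits}, use Lemma \ref{l:digits} to express each digit as $\floor{\{-p^{M-1-j}x\}p}$ with fractional parts in $\frac{1}{D}\bbZ$, and conclude that for $p>D$ the comparison is governed by data depending only on $p \bmod D$. The only difference is cosmetic: you prove the two-sided equivalence (membership in $S(a,b;c)$ depends only on the residue class for $p>D$), whereas the paper writes $q=p+tD$ and checks that the digit inequality persists as $t$ grows, which is the same computation yielding the one-sided statement; both suffice for the density claim.
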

\begin{proof}
Let $a_j(p)$ denote the $j$th $p$-adic digit of $a-1$, and define $b_j(p)$ and $c_j(p)$ similarly. Then by Lemma \ref{l:distinctdigits}, if $p > D$ we have $a_j(p) \neq c_j(p)$ and $b_j(p) \neq c_j(p)$ for all $j$. Hence by Theorem \ref{t:necandsuff}, to determine whether such a prime lies in $S(a,b;c)$, we need only determine whether there exists an index $j$ such that $c_j(p) > a_j(p)$ and $c_j(p) > b_j(p)$. If $M$ is the period of this data, then by periodicity of the $p$-adic expansions, we can concentrate on those $j$ in the range $0 \leq j < M$.

Let $p > D$ be a good prime such that $c_j(p) > a_j(p)$ for some index $0 \leq j < M$. By Lemma \ref{l:digits}, this is equivalent with
\begin{equation}
\label{eq:sgpineq}
  \floor{\left\{-p^{M-1-j}c\right\}p} > \floor{\left\{-p^{M-1-j}a\right\}p}.
\end{equation}
Let $q = p+tD$ denote another prime, where $t$ is a positive integer. Then the condition that $c_j(q) > a_j(q)$ is equivalent with
\[
  \floor{\left\{-p^{M-1-j}c\right\}p} + \left\{-p^{M-1-j}c\right\}tD > \floor{\left\{-p^{M-1-j}a\right\}p} + \left\{-p^{M-1-j}a\right\}tD.
\]
But this inequality is implied by \eqref{eq:sgpineq}. Hence if $c_j(p) > a_j(p)$, then $c_j(q) > a_j(q)$ for all primes $q \geq p$ such that $q\equiv p \pmod{D}$. Since the same argument holds with $a$ replaced by $b$, this concludes the proof of the Proposition.
\end{proof}
\begin{rmk}
\label{r:computedensity} There exists a simple algorithm for computing the Dirichlet density of $S(a,b;c)$ for any admissible parameters. One can simply use Theorem \ref{t:necandsuff} to check primes $p > D$ lying in the $\phi(D)$ possible congruence classes mod $D$. Once a prime is found such that $_2F_1(a,b;c;z)$ has unbounded $p$-adic coefficients, then the rest of the primes $q \geq p$ in the arithmetic progression $q \equiv p \pmod{D}$ are contained in $S(a,b;c)$ by Proposition \ref{p:SGP}. Lemma \ref{l:digits} can be used to provide a stopping criterion to determine if a congruence class has finite intersection with $S(a,b;c)$. For example, if $x \in (0,1)$ is rational, and $x-1 \in \bbZ_p^\times$, then by Lemma \ref{l:digits}, the $j$th $p$-adic digit $x_j(p)$ of $x-1$ satisfies
\[
  \{-p^{M-1-j}x\}-\frac 1p < \frac{x_j(p)}{p} < \{-p^{M-1-j}x\}.
\]
Observe that the fractional part above only depends on $p$ modulo the denominator of $x$. By the proof of Lemma \ref{l:distinctdigits} and since $p > D$, if we consider the corresponding fractional parts with $x = a,b,c$, then they are distinct. By periodicity of the $p$-adic expanions of $a-1$, $b-1$ and $c-1$ for admissible parameters, we need only check a finite number of coefficients using Theorem \ref{t:necandsuff}. Hence if $p$ is large enough to ensure that
\[
\frac 1p < \min_{0\leq j <M}\left(\min_{\substack{x,y \in \{a,b,c\}\\ x\neq y}}\left |\{-p^{M-1-j}x\}-\{-p^{M-1-j}y\}\right|\right),
\] 
then it suffices to test whether $p \in S(a,b,c)$ for such a prime. If $p \in S(a,b,c)$, then all primes $q \geq p$ satisfying $q \equiv p \pmod{D}$ will be contained in $S(a,b;c)$. Otherwise, $S(a,b;c)$ has finite intersection with this congruence class.
\end{rmk}

\begin{thm}
\label{t:density0}
Let $(a,b;c)$ denote admissible hypergeometric parameters. Let $D$ denote the least common multiple of the denominators of $a$, $b$ and $c$. Then the following are equivalent:
\begin{enumerate}
\item[(i)] the monodromy group of the corresponding hypergeometric differential equation is finite;
\item[(ii)] the set $S(a,b;c)$ is finite;
\item[(iii)] for every integer $u$ coprime to $D$, the fractional parts $\{ua\}$, $\{ub\}$ and $\{uc\}$ are such that $\{uc\}$ lies between $\{ua\}$ and $\{ub\}$.
\end{enumerate}
\end{thm}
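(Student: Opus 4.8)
The plan is to establish the cycle of implications (i) $\Rightarrow$ (iii) $\Rightarrow$ (ii) $\Rightarrow$ (i), where the heart of the argument connects the combinatorial digit condition of Theorem \ref{t:necandsuff} to both the density of $S(a,b;c)$ and to the finiteness of monodromy via the Beukers--Heckman criterion.

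I would begin with the equivalence (ii) $\Leftrightarrow$ (iii), which is the most self-contained piece and is essentially a reformulation of Theorem \ref{t:necandsuff} combined with Proposition \ref{p:SGP} and the density algorithm of Remark \ref{r:computedensity}. By Proposition \ref{p:SGP}, a prime $p > D$ lies in $S(a,b;c)$ if and only if there is an index $0 \leq j < M$ with $c_j(p) > a_j(p)$ and $c_j(p) > b_j(p)$, and whether this holds depends only on the residue $p \equiv u \pmod D$ for large $p$. By Lemma \ref{l:digits}, as $p \to \infty$ through a fixed residue class $u$, the ratio $c_j(p)/p$ converges to $\{-u^{M-1-j}c\}$, and similarly for $a$ and $b$; since these limits are distinct (by Lemma \ref{l:distinctdigits}, or the spacing estimate in Remark \ref{r:computedensity}), the strict inequalities $c_j(p) > a_j(p), b_j(p)$ for large $p$ in class $u$ are equivalent to $\{-u^{M-1-j}c\}$ exceeding both $\{-u^{M-1-j}a\}$ and $\{-u^{M-1-j}b\}$. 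Thus $S(a,b;c)$ is finite precisely when, for every residue class $u$ coprime to $D$, there is \emph{no} index $j$ at which $\{-u^{M-1-j}c\}$ is the strict maximum of the three fractional parts. Running $j$ over $0,\ldots,M-1$ and replacing $-u^{M-1-j}$ by a general integer $v$ coprime to $D$ (using periodicity), this says exactly that for all such $v$, the value $\{vc\}$ is never strictly largest, i.e.\ $\{vc\}$ always lies between $\{va\}$ and $\{vb\}$ --- which, after substituting $u = -v$ and using that the set of residues coprime to $D$ is closed under negation, is condition (iii).

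For (i) $\Leftrightarrow$ (iii) I would invoke the Beukers--Heckman classification. The monodromy of $_2F_1(a,b;c;z)$ is determined by the local exponent data at $0,1,\infty$, and for admissible parameters the relevant quantities are the fractional parts of the differences of the parameters. Beukers--Heckman characterize finite monodromy through the \emph{interlacing condition}: the monodromy group is finite if and only if the two sets of exponents interlace on the unit circle after applying every Galois automorphism, i.e.\ for every $u$ coprime to the common denominator the numbers $e^{2\pi i u a}, e^{2\pi i u b}$ and $e^{2\pi i u \cdot 0}, e^{2\pi i u(c-1)}$ alternate around the circle. Translating this interlacing to the real interval $[0,1)$ via fractional parts yields precisely the betweenness condition in (iii): $\{uc\}$ separates $\{ua\}$ from $\{ub\}$ for every $u$ coprime to $D$. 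I would make this translation explicit, taking care that the admissibility hypothesis guarantees all three fractional parts lie in $(0,1)$ and are pairwise distinct, so the notion of ``between'' is unambiguous.

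The main obstacle I anticipate is the precise matching of conventions in the (i) $\Leftrightarrow$ (iii) step: the Beukers--Heckman interlacing is stated for the numerator and denominator exponents $\{a_i\}$ and $\{b_j\}$ of the hypergeometric datum (with the implicit ``$0$'' among the denominator exponents), and one must verify that the shift by $-1$ used throughout this paper in passing from $a,b,c$ to $a-1,b-1,c-1$, together with the role of $c$ versus $1$ and $c-1$ versus $0$, lines up the circular interlacing with the linear betweenness statement correctly. Establishing the (ii) $\Leftrightarrow$ (iii) direction is comparatively routine given the machinery already developed, but care is required to handle the passage between strict digit inequalities for finite primes and the limiting strict inequalities of fractional parts, ensuring no boundary or equality cases are lost --- this is exactly where Lemma \ref{l:distinctdigits} and the spacing bound of Remark \ref{r:computedensity} do the work.
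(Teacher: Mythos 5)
Your proposal is correct and follows essentially the same route as the paper: the equivalence of (i) and (iii) is obtained by citing the Beukers--Heckman interlacing criterion (the paper simply invokes their Theorem 4.8, so the convention-matching you worry about is absorbed into the citation), and the link between (ii) and (iii) is made exactly as you describe, via Theorem \ref{t:necandsuff}, Lemma \ref{l:digits}, Lemma \ref{l:distinctdigits}, Dirichlet's theorem, and the negation trick $u \mapsto -u$ to upgrade ``$\{uc\}$ is never the strict maximum'' to ``$\{uc\}$ lies between $\{ua\}$ and $\{ub\}$.'' (Be careful with your ``i.e.''\ there: those two statements are not synonymous, and it is precisely the closure of $(\bbZ/D\bbZ)^\times$ under negation together with $\{-x\}=1-\{x\}$ that makes them equivalent --- your following clause supplies this, but the phrasing suggests an identification rather than a deduction.) The one genuine difference in decomposition is that the paper closes the cycle with (i) $\Rightarrow$ (ii) via Eisenstein's theorem and only proves the implication (ii) $\Rightarrow$ (iii) by the digit argument, whereas you prove (iii) $\Rightarrow$ (ii) directly from the digit criterion as well; your variant makes that leg of the argument self-contained and elementary (no appeal to Eisenstein), at the cost of having to verify the stopping bound of Remark \ref{r:computedensity} to convert the limiting fractional-part inequalities back into digit inequalities for all sufficiently large primes. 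Both are valid, and the difference is minor.
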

\begin{proof}
For the equivalence of (i) and (iii), see Theorem 4.8 in \cite{BeukersHeckman}. It is well-known that (i) implies (ii), say by Eisenstein's theorem (see \cite{DworkVanDerPoorten} for an interesting discussion of this result). To complete the proof we will show that (ii) implies (iii).

Thus assume that (ii) holds. Let $a_j(p)$ denote the $p$-adic digits of $a-1$, and definte $b_j(p)$ and $c_j(p)$ similarly. By Theorem \ref{t:necandsuff} and Lemma \ref{l:distinctdigits} there exists a finite set of primes $S$ with the property that for each prime $p \not \in S$, and for every index $j$, one has either $c_j(p) < a_j(p)$ or $c_j(p) < b_j(p)$. By Lemma \ref{l:digits} we have either $\floor{\{-p^{M-1-j}c\}p} < \floor{\{-p^{M-1-j}a\}p}$ or $\floor{\{-p^{M-1-j}c\}p} < \floor{\{-p^{M-1-j}b\}p}$ for each index $j$.

Since $\{ua\}$ only takes values of the form $\frac{\alpha}{D}$, and similarly with $a$ replaced by $b$ and $c$, we see that for all but finitely many primes, in fact for each $j$ either $\{-p^{M-1-j}c\} < \{-p^{M-1-j}a\}$ or $\{-p^{M-1-j}c\} < \{-p^{M-1-j}b\}$. By varying $p$ and $j$ we obtain that for every $u$ coprime to $D$, where $D$ is the least common multiple of the denominators of $a$, $b$ and $c$, that either $\{uc\} < \{ua\}$ or $\{uc\} < \{ub\}$ (here we have used Dirichlet's theorem on primes in arithmetic progressions). 

Suppose that $\{uc\} < \{ua\}$ and $\{uc\} < \{ub\}$ for some integer $u$ coprime to $D$. Write $Duc = x_cD+r_c$, $Dub = x_bD+r_b$ and $Dua = x_aD+r_a$, where the remainders $r$ satisfy $0 < r < D$. Then $\frac{r_c}{D} < \frac{r_a}{D}$ and $\frac{r_c}{D} < \frac{r_b}{D}$. But then observe that
\[
  -Duc = -x_cD-r_c = (1-x_c)D+(D-r_c),
\]
and similarly for $-Dua$ and $-Dub$. Then it follows that $\{-uc\} > \{-ua\}$ and $\{-uc\} > \{-ub\}$, a contradiction. Hence it must be the case that for every integer $u$ coprime to the denominators, $\{uc\}$ lies between $\{ua\}$ and $\{ub\}$, as claimed.
\end{proof}

\begin{thm}
\label{t:density1}
Let $(a,b;c)$ denote admissible hypergeometric parameters, and let $D$ denote the least common multiple of the denominators of $a$, $b$ and $c$. Then the following are equivalent:
\begin{enumerate}
\item[(i)] $c$ is the smallest of the three parameters;
\item[(ii)] $S(a,b;c)$ contains all but finitely many primes;
\item[(iii)] $S(a,b;c)$ contains infinitely many primes $p$ such that $p \equiv 1 \pmod{D}$.
\end{enumerate}
In particular, one third of all hypergeometric series with admissible parameters have the property that their coefficients are $p$-adically unbounded for one-hundred percent of all primes.
\end{thm}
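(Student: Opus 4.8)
The plan is to prove the cycle (i) $\Rightarrow$ (ii) $\Rightarrow$ (iii) $\Rightarrow$ (i) and then read off the ``one third'' assertion by a symmetry argument. Throughout I would restrict to good primes $p > D$: for these, Lemma \ref{l:distinctdigits} guarantees that the $p$-adic digits of $a-1$, $b-1$ and $c-1$ are pairwise distinct at every index, and the remark following Theorem \ref{t:necandsuff} lets me work with individual digits rather than truncations. I write $x_j(p)$ for the $j$th digit of $x-1$, and $M$ for the associated period.

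For (i) $\Rightarrow$ (ii) I would examine the digit of index $j = M-1$. By Lemma \ref{l:digits}, for each $x \in \{a,b,c\}$ this digit equals $\floor{\{-x\}p} = \floor{(1-x)p}$, using $0 < x < 1$. If $c$ is smallest then $1-c$ is the largest among $1-a,1-b,1-c$, so $(1-c)p$ exceeds both $(1-a)p$ and $(1-b)p$; taking floors gives $c_{M-1}(p) \geq a_{M-1}(p)$ and $c_{M-1}(p) \geq b_{M-1}(p)$, which distinctness upgrades to strict inequalities. Thus condition (i) of Theorem \ref{t:necandsuff} holds at $j = M-1$ and $p \in S(a,b;c)$. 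As this applies to every good prime $p>D$, only finitely many primes are omitted, giving (ii). The implication (ii) $\Rightarrow$ (iii) is then immediate, since the class $p \equiv 1 \pmod{D}$ contains infinitely many primes by Dirichlet and (ii) excludes only finitely many of them.

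The crucial step is (iii) $\Rightarrow$ (i). Here I would specialize to primes $p \equiv 1 \pmod{D}$ with $p > D$: the order of $p$ modulo each denominator is $1$, so the period is $M = 1$ and each of $a-1$, $b-1$, $c-1$ has a single repeating digit $x_0 = \floor{(1-x)p}$. Then $\tau_j(x-1) = x_0(p^j-1)/(p-1)$ for all $j$, so the criterion of Theorem \ref{t:necandsuff} collapses to the single comparison ``$c_0 > a_0$ and $c_0 > b_0$''; equivalently $p \in S(a,b;c)$ iff $\floor{(1-c)p} > \floor{(1-a)p}$ and $\floor{(1-c)p} > \floor{(1-b)p}$. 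For $p > D$, distinctness makes this equivalent to $1-c > 1-a$ and $1-c > 1-b$, i.e.\ to $c$ being smallest---a condition independent of $p$. Hence among large primes in this class either all lie in $S(a,b;c)$ or none do; since (iii) supplies infinitely many, $c$ must be smallest.

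For the final assertion, among admissible parameters with distinct values (the generic case) exactly one of the three is smallest, and by the symmetry of the three slots this is $c$ in precisely one third of configurations; the equivalence (i) $\Leftrightarrow$ (ii) then identifies these with the series whose unbounded locus has density one. I expect the only genuine subtlety to be the reduction in (iii) $\Rightarrow$ (i) of the period-$M$ truncation criterion to a single-digit comparison at $M=1$, together with the repeated appeal to Lemma \ref{l:distinctdigits} to convert the weak floor inequalities into strict ones. The ``one third'' statement is the softest point, being a genericity argument about the parameter space rather than a claim with a fully rigorous underlying measure.
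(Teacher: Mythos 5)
Your proposal is correct and follows essentially the same route as the paper: both prove (i) $\Rightarrow$ (ii) by comparing the digit at index $M-1$ via Lemma \ref{l:digits}, treat (ii) $\Rightarrow$ (iii) as immediate, and prove (iii) $\Rightarrow$ (i) by specializing to $p \equiv 1 \pmod{D}$ where $M=1$ reduces the criterion of Theorem \ref{t:necandsuff} to a single digit comparison. The only cosmetic difference is that the paper obtains the strict inequality in (i) $\Rightarrow$ (ii) from the explicit bound $p > \max\left(\frac{1}{a-c},\frac{1}{b-c}\right)$ rather than from Lemma \ref{l:distinctdigits}, and, like you, it leaves the ``one third'' remark as an informal genericity statement.
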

\begin{proof}
  First suppose that $c < a$ and $c < b$. Let $M$ denote the period of this data, and let $a_j(p)$, $b_j(p)$ and $c_j(p)$ denote the $p$-adic digits of $a-1$, $b-1$ and $c-1$, respectively. Then by Lemma \ref{l:digits} we have $a_{M-1}(p) =p +\floor{-pa}$, $b_{M-1}(p) = p+\floor{-pb}$ and $c_{M-1}(p) = p+\floor{-pc}$. Since $c<a$ and $c<b$, if $p > \max(\frac{1}{a-c},\frac{1}{b-c})$ we have $-a+\frac{1}{p} < -c$ and $-b+\frac{1}{p} < -c$. But this implies that $a_{M-1}(p) < c_{M-1}(p)$ and $b_{M-1}(p) < c_{M-1}(p)$, and hence $p \in S(a,b;p)$ by Theorem \ref{t:necandsuff}. Thus, if $c$ is the smallest of the three parameters, then $S(a,b;c)$ contains every prime $p$ satisfying $p > \max(\frac{1}{a-c},\frac{1}{b-c})$. That is, (i) implies (ii). That (ii) implies (iii) is obvious.

Finally suppose that $S(a,b;c)$ contains infinitely many primes $p$ of the form $p \equiv 1 \pmod{D}$. For such primes we have $M =1$ by Lemma \ref{l:periodicexpansion}, and thus by Theorem \ref{t:necandsuff} and Lemma \ref{l:digits} there exists a prime $p \equiv 1 \pmod{D}$ such that $\{-c\} > \{-a\}$ and $\{-c\} > \{-b\}$. Hence $c < a$ and $c < b$, which shows that (iii) implies (i).
\end{proof}

\section{Hypergeometric series and modular forms}
\label{s:Schwarzlist}

Let $\Gamma \subseteq \SL_2(\bbR)$ denote a Fuchsian group, let $Y$ denote the curve $\Gamma \backslash \uhp$, and let $Y'$ denote $Y$ with all elliptic points removed. If the image of $\Gamma$ in $\PSL_2(\bbR)$ can be generated by two elements, then $Y'$ can be identified with $\bbP^1\setminus \{0,1,\infty\}$. Let $u(\tau) \colon \uhp' \to Y'$ denote a uniformizing map realizing this isomorphism, where $\uhp'$ denotes $\uhp$ deprived of its elliptic points for $\Gamma$. This uniformization identifies the image of $\Gamma$ in $\PSL_2(\bbR)$ with the orbifold fundamental group of $Y$. This group is in general a quotient of the fundamental group of the Riemann surface $Y'$. Solutions to Fuchsian differential equations on $Y'$ can be pulled back via $u(\tau)$ to vector-valued modular forms (of weight $0$) of $\Gamma$ on $\uhp$ that transform according to the monodromy representation $\rho$ of the fundamental group of $Y'$, provided  $\rho$  factors through the orbifold fundamental group of $Y$. That is, they are vector-valued functions $F \colon \uhp \to \bbC^d$, meromorphic at the cusps and elliptic points and holomorphic elsewhere, that satisfy a transformation law
\[
F(\gamma \tau) {=} \rho(\gamma)F(\tau)
\]
for all $\tau {\in} \uhp$ and $\gamma {\in} \Gamma$. This relation of vector-valued modular forms and Fuchsian differential equations on $\bbP^1$ goes back at least to \cite{BantayGannon}.

A natural case to consider is the group $\Gamma(2)$, whose image in $\PSL_2(\bbR)$ is free on two generators. Vector-valued modular forms for $\Gamma(2)$ thus describe all solutions of Fuchsian equations on $\bbP^1\setminus\{0,1,\infty\}$. If $\lambda(\tau)$ denotes a uniformizing map taking the cusps $0$, $1$ and $\infty$ of $\Gamma(2)$ to $0$, $1$ and $\infty$ in $\bbP^1$, respectively, then for each hypergeometric series $_nF_{n-1}(\alpha_i;\beta_j;z)$ that we've been considering, the function $_nF_{n-1}(\alpha_i;\beta_j;\lambda(\tau))$ is a component of a vector-valued modular form for some $n$-dimensional representation $\rho$ of $\Gamma(2)$. Conversely, all (weakly-holomorphic) vector-valued modular forms for $\Gamma(2)$ of weight $0$ can be expressed in the form $F(\lambda(\tau))$ where $F(z)$ is a vector whose entries form a basis of solutions of a Fuchsian differential equation on $\bbP^1$. In general the function $F(z)$ need not arise from a generalized hypergeometric differential equation. 

These observations connect the question of unbounded denominators of Taylor coefficients of solutions of differential equations with the question of unbounded denominators of Fourier coefficients of modular forms. Note, though, that $F(z)$ and $F(\lambda(\tau))$ need not have the exact same set of primes $p$ such that their coefficients are $p$-adically bounded. The difference between these two sets of primes is a finite set. For example, a common occurrence is for the modular form $F(\lambda(\tau))$ to have integer coefficients, say due to it being a congruence modular form, whereas $F(z)$ could have a finite number of unbounded primes occurring in its coefficients that are cancelled upon substituting in the uniformizing map $\lambda(\tau)$. Richard Gottesman treats the question of unbounded denominators for vector-valued modular forms of rank $2$ on $\Gamma(2)$ in detail in his upcoming PhD thesis \cite{Gottesman}, using the ideas discussed above.

In \cite{FrancMason1}, Franc-Mason studied the somewhat simpler case of $\SL_2(\bbZ)$. Although $\SL_2(\bbZ)$ is two-generated, it is not free on two generators, and so not all solutions of Fuchsian equations can be described in terms of vector-valued modular forms for $\SL_2(\bbZ)$. Conversely, \cite{FrancMason1},\cite{Gannon} observed that all holomorphic modular forms for 
$\SL_2(\bbZ)$ of rank two can be described in terms of solutions of hypergeometric differential equations. That is, one need not consider more general Fuchsian equations on $\bbP^1\setminus \{0,1,\infty\}$ of rank two\footnote{Of course, the study of such equations can be reduced to the study of hypergeometric equations, as was known to Riemann.}. The case of $\SL_2(\bbZ)$ is simplified further by the work of Mason in \cite{Mason2-dim}, which shows that all finite image representations $\rho$ of $\SL_2(\bbZ)$ are such that $\ker \rho$ is a congruence subgroup. Thus, the question of unbounded denominators amounts to proving in this case that when the image of $\rho$ is not finite, then the corresponding modular forms have unbounded denominators in a strong sense: if $\rho$ has infinite image, then there should be infinitely many primes appearing in the denominators of modular forms for $\rho$. The paper \cite{FrancMason1} made very modest use of hypergeometric series to prove this assertion. Essentially \cite{FrancMason1} showed that for such a representation $\rho$, there exists a modular form for $\rho$ and an arithmetic progression of primes $p$ such that $p$ appears at least to power $p^{-1}$ in the coefficients of the modular form. Using this, unbounded denominators were then established for all modular forms associated with $\rho$. By Theorem \ref{t:density0} above, we now know that in the infinite image case, not only does $p$ appear at least once in some denominator, but in fact there must exist a positive density of primes $p$ such that a given modular form for $\rho$ has $p$-adically unbounded Fourier coefficients.

In the remainder of this section we collect data and facts from \cite{Mason2-dim} and \cite{FrancMason1} to describe a modular Schwartz list for $\SL_2(\bbZ)$. That is, we describe all finite-image irreducible representations of $\SL_2(\bbZ)$ of rank two, as well as the corresponding modular forms and hypegeometric series. Note that unlike the classical Schwartz list, which is infinite due to a proliferation of dihedral representations, this modular Schwarz list at level one is in fact finite, and it includes only congruence representations.

 Let $\rho : \SL_2(\bbZ) \to \GL_2(\bbC)$ be an irreducible, 2-dimensional representation of $\SL_2(\bbZ)$ with \emph{finite image} and let $F(\tau)$ be a nonzero holomorphic vector-valued modular form of least integral weight $k_0$ for $\rho$. Thus $F : \uhp \to \bbC^2$ is holomorphic and satisfies
\[
F(\gamma\tau) = (c\tau+d)^{k_0}\rho(\gamma)F(\tau) \quad  \textrm{for all} \quad \gamma = \twomat abcd \in \SL_2(\bbZ).
\]
Unfortunately, unless $k_0 = 0$, this modular form is a section of a projectively flat holomorphic vector bundle that is not flat. Since the square $\eta^2$ of Dedekind's $\eta$-function transforms under $\SL_2(\bbZ)$ via a character $\chi$, and since it is nonvanishing in $\uhp$, we can set $\hat F(\tau) = F(\tau)/\eta^{2k_0}$ to shift $F$ to weight zero, but at the expense of changing the representation $\rho$ to $\hat \rho = \rho \otimes \chi^{-k_0}$. This adjusted function is naturally a global section of a holomorphic connection with a regular singularity at the cusp of $\SL_2(\bbZ)$, and so it thus satisfies an ordinary differential equation. More precisely, the paper \cite{FrancMason1} showed that the component functions of $\hat{F}(\tau)$ are a pair of fundamental solutions of a hypergeometric differential equation which has $\hat{\rho}$ as its monodromy representation. In particular, the components of $F(\tau)$ may be expressed in terms of hypergeometric series evaluated at a certain level one hauptmodul as follows:
 \begin{eqnarray}\label{2components}
&&f_1(\tau){:=}\eta^{2k_0}(\tau)j^{-a}(\tau) {_2}F_1(a, 1{+}a{-}c; 1{+}a{-}b; J^{-1})\\
&&f_2(\tau){:=}\eta^{2k_0}(\tau)j^{-b}(\tau) {_2}F_1(b, 1{+}b{-}c; 1{+}b{-}a; J^{-1})\notag
\end{eqnarray}
for certain constants $a, b, c$ (see below), and where $q{:=}e^{2\pi i\tau}$ and
\begin{eqnarray*}
&&j(\tau){:=}\frac{E_4^3(\tau){-}E_6^2(\tau)}{\Delta(\tau)},\ \ \ J(\tau){:=}\frac{j(\tau)}{1728},
\ \ \ \eta(\tau){:=}q^{1/24}\prod_{n=1}^{\infty} (1-q^n).
\end{eqnarray*}
Note that these expressions depend on a choice of basis for $\rho$. In particular, since $\rho$ is of finite image and irreducible, the matrix $\rho(T)$ (where $T = \stwomat 1101$) has distinct roots of unity as eigenvalues, and so we diagonalize it as
\[
\rho(T)=\left(\begin{array}{cc} e^{2\pi i m_1} & 0 \\0 & e^{2\pi im_2}\end{array}\right)
\]
for rational numbers $m_1,m_2 \in [0,1)$. (Incidentally, $\rho \stwomat 0{-1}10$ is computed for all 2-dimensional examples, in this basis \eqref{2components}, in Section 4.2 of \cite{Gannon}.) With this notation, one finds (as in \cite{FrancMason1}) that
\begin{align*}
a &= \frac{1}{12} + \frac{m_1-m_2}{2}, & b &= \frac{1}{12} - \frac{m_1-m_2}{2}, & c &= \frac{2}{3}. 
\end{align*}
Note that in certain places in \cite{Mason2-dim} and \cite{FrancMason1} it was convenient to assume that $m_1 \leq m_2$, but it is not necessary to do so for these formulae to hold, and so we make no such hypothesis here. Note also that the formulae in (\ref{2components}) arise from solving a hypergeometric equation at the singular point $\infty$, and this is why the quantities $a$, $1+a-c$, etc occur, rather than $a$, $b$ and $c$.

Since $\rho$ is an irreducible representation of $\SL_2(\bbZ)$ of finite image,  it is also known \cite{Mason2-dim} that $\rho$ has a congruence subgroup for its kernel, say of level $N$. Therefore, the $q$-series expansions of the components of $F(\tau)$ are classical scalar modular forms of level $N$. They thus have bounded denominators, but this does not mean that the hypergeometric series in \eqref{2components} necessarily have bounded denominators (and indeed, they do not).

Below we list the $54$ isomorphism classes of irreducible and finite image $\rho$ together with relevant data pertaining to both the representation and the corresponding hypergeometric differential equation. We will see that, in a sense, only $18$ different hypergeometric series are involved. The data is organized so that each table corresponds to one orbit of representations under tensoring with the one-dimensional characters of $\SL_2(\bbZ)$. There are five distinct orbits, four containing twelve representations each, and one containing only six representations. To see that the dihedral orbit is indeed only of size six, one must use the fact that irreducible representations of $\SL_2(\bbZ)$ of dimension two are determined up to isomorphism by their $\rho(T)$ eigenvalues, and that swapping the eigenvalues yields an isomorphic representation (see \cite{Mason2-dim}). Finally, since we have $c = \frac{2}{3}$ in all of these examples, we omit $c$ from the data. 
\bigskip

\begin{minipage}{\textwidth}
    \centering
    \begin{tabular}{|ll|l|l|ll|}
      \hline
$m_1$ & $m_2$ & $N$ & $k_0$ & $a$ & $b$ \\
\hline
0&1/2& 2& 2 & -1/6&1/3\\
1/12&7/12& 12& 3&-1/6& 1/3\\
1/6&2/3& 6& 4&-1/6& 1/3\\
1/4&3/4& 4& 5 &-1/6& 1/3\\
1/3&5/6& 6& 6 &-1/6& 1/3\\
      5/12&11/12 &12& 7 &-1/6& 1/3\\
                               \hline
    \end{tabular}
    \medskip

\emph{Table 1:} Dihedral orbit.
\end{minipage}
\medskip

\begin{minipage}{\textwidth}
    \centering
\begin{tabular}{|ll|l|l|ll|}
\hline
  $m_1$ & $m_2$ & $N$ & $k_0$ & $a$ & $b$ \\
\hline
0 &2/3 & 3 & 3 & -1/4 & 5/12 \\
1/12 &3/4 & 12 & 4 & -1/4 & 5/12 \\
1/6 &5/6  & 6 & 5 & -1/4 & 5/12 \\
  1/4 &11/12 & 12 & 6 & -1/4 & 5/12 \\
  \hline
1/3 & 0 & 3 & 1 & 1/4 & -1/12 \\
5/12 & 1/12 & 12 & 2 & 1/4 &  -1/12 \\
1/2 & 1/6 & 6 & 3 & 1/4 & -1/12 \\
7/12 & 1/4 & 12 & 4 & 1/4 &  -1/12 \\
2/3 & 1/3 & 3 & 5 & 1/4 & -1/12 \\
3/4 & 5/12 & 12 & 6 & 1/4 & -1/12 \\
5/6 & 1/2 & 6 & 7 & 1/4 &  -1/12 \\
  11/12 & 7/12 & 12 & 8 & 1/4 & -1/12 \\
  \hline
\end{tabular}
    \medskip

\emph{Table 2:} Tetrahedral orbit.
\end{minipage}
\bigskip

  \begin{minipage}{\textwidth}
    \centering
  \begin{tabular}{|ll|l|l|ll|}
    \hline
$m_1$ & $m_2$ & $N$ & $k_0$ & $a$ & $b$ \\
\hline
1/24 &19/24& 24 & 4& -7/24 &11/24\\
1/8 &7/8 & 8& 5& -7/24 & 11/24 \\
    5/24 &23/24& 24 & 6 & -7/24 &11/24 \\
    \hline
7/24 & 1/24& 24& 1& 5/24&-1/24\\
3/8 &  1/8 & 8& 2& 5/24&-1/24\\
11/24 & 5/24 & 24 & 3 & 5/24&-1/24\\
13/24 & 7/24 & 24 & 4 & 5/24&-1/24 \\
5/8 & 3/8 & 8 & 5 & 5/24&-1/24 \\
17/24 & 11/24 & 24 & 6 & 5/24&-1/24 \\
19/24 & 13/24 & 24 & 7 & 5/24&-1/24\\
7/8 & 5/8 & 8 & 8 & 5/24&-1/24 \\
    23/24 & 17/24 & 24 & 9 & 5/24&-1/24\\
\hline
  \end{tabular}
  \medskip

\emph{Table 3:} Octahedral orbit.
\end{minipage}
\bigskip

\begin{minipage}{\textwidth}
  \centering
\begin{tabular}{|ll|l|l|ll|}
\hline
  $m_1$ & $m_2$ & $N$ & $k_0$ & $a$ & $b$ \\
\hline
1/30 & 19/30 & 30 & 3 & -13/60 & 23/60 \\
7/60 & 43/60 & 60 & 4 & -13/60 & 23/60 \\
1/5 & 4/5 & 5 & 5 & -13/60 & 23/60 \\
17/60 & 53/60 & 24 & 6 & -13/60 & 23/60 \\
  11/30 & 29/30 & 30 & 7 & -13/60 & 23/60 \\
  \hline
9/20 & 1/20 & 20 & 2 & 17/60 & -7/60 \\
8/15 & 2/15 & 15 & 3 & 17/60 & -7/60 \\
37/60 & 13/60 & 60 & 4 & 17/60 & -7/60 \\
7/10 & 3/10 & 10 & 5 & 17/60 & -7/60 \\
47/60 & 23/60 & 60 & 6 & 17/60 & -7/60 \\
13/15 & 7/15 & 15 & 7 & 17/60 & -7/60 \\
  19/20 & 11/20 & 20 & 8 & 17/60 &  -7/60 \\
  \hline
\end{tabular}
\medskip

\emph{Table 4:} Icosahedral orbit 1.
\end{minipage}
\bigskip

\begin{minipage}{\textwidth}
\centering
\begin{tabular}{|ll|l|l|ll|}
  \hline
$m_1$ & $m_2$ & $N$ & $k_0$ & $a$ & $b$ \\
\hline
1/60 & 49/60 & 60 & 4 & -19/60 & 29/60 \\
1/10 & 9/10 & 10 & 5 & -19/60 &29/60 \\
  11/60 & 59/60 & 60 & 6 & -19/60 & 29/60 \\
  \hline
4/15 & 1/15 & 15 & 1 & 11/60 & -1/60 \\
7/20 & 3/20 & 20 & 2 & 11/60 & -1/60 \\
13/30 & 7/30 & 30 & 3 & 11/60 & -1/60 \\
31/60 & 19/60 & 60 & 4 & 11/60 & -1/60 \\
3/5 & 2/5 & 5 & 5 & 11/60 & -1/60 \\
41/60 & 29/60 & 60 & 6 & 11/60 & -1/60 \\
23/30 & 17/30 & 30 & 7 & 11/60 & -1/60 \\
17/20 & 13/20 & 20 & 8 & 11/60 & -1/60 \\
  14/15 & 11/15 & 15 & 9 & 11/60 & -1/60 \\
  \hline
\end{tabular}
\medskip

\emph{Table 5:} Icosahedral orbit 2.
\end{minipage}
\bigskip

For a discussion about why the parameters change when the eigenvalue $e^{2\pi im_2}$ wraps around the circle, see Remark 3.12 of \cite{CandeloriFranc}. We see that there are nine \emph{essential} pairs of hypergeometric series
\begin{align*}
{_2}F_1\left(a,\frac{1}{3}+a;1+a-b;z\right) && {_2}F_1\left(b,\frac{1}{3}+b;1+b-a;z\right)
\end{align*}
that play a r\^{o}le in the theory of holomorphic vector-valued modular forms for two-dimensional irreducible representations of $\SL_2(\bbZ)$ of finite image. The results of the present paper allow one to easily check that these series have $p$-adically unbounded coefficients for only finitely many primes $p$. A more careful analysis at the unbounded primes allows one to show that after substituting $z=J^{-1}$ and multiplying by the appropriate power of $J$, one obtains $q$-series with rational coefficients having bounded denominators, as one knows.

\bibliographystyle{plain}

\end{document}